\newcolumntype{P}[1]{>{\centering\arraybackslash}p{#1}}
\newenvironment{roster}
 {\begin{enumerate}[font=\upshape,label=(\roman*)]}
 {\end{enumerate}}
 \newtheorem{thm}{Theorem}[section]
\newtheorem{cor}[thm]{Corollary}
\newtheorem{lemma}[thm]{Lemma}
\newtheorem{prop}[thm]{Proposition}
\theoremstyle{definition}
\newtheorem{ex}[thm]{Example}
\theoremstyle{definition}
\theoremstyle{definition}
\newcommand{\nc}{\newcommand}
\nc{\mc}{\mathcal}
\nc{\mb}{\mathbb}
\nc{\mf}{\mathfrak}
\nc{\ul}{\underline}
\nc{\ol}{\overline}
\nc{\N}{\mb N}
\nc{\R}{\mb R}
\nc{\Z}{\mb Z}
\nc{\Q}{\mb Q}
\nc{\C}{\mb C}
\nc{\F}{\mb F}
\nc{\dmo}{\DeclareMathOperator}
\nc{\mat}[4]{
    \begin{pmatrix}
      #1 & #2 \\
      #3 & #4
    \end{pmatrix}
}
\dmo{\Ker}{Ker} \dmo{\val}{val} \dmo{\ord}{ord}
\dmo{\I}{I}
\dmo{\II}{II}
\dmo{\odd}{odd}
\dmo{\sgn}{sgn}
\dmo{\dett}{det}
\dmo{\Span}{Span}
\dmo{\Syl}{Syl}
\dmo{\diag}{diag}
\dmo{\Sq}{Sq}
\dmo{\irr}{Irr}
\nc{\beq}{\begin{equation*}}
\nc{\eeq}{\end{equation*}}
\nc{\half}{\frac{1}{2}}
\dmo{\Ima}{Im}
\dmo{\too}{top}
\dmo{\mult}{mult}
\dmo{\Mod}{mod}
\dmo{\core}{core}
\dmo{\res}{res}
\dmo{\lin}{lin}
 \dmo{\St}{St}
 \dmo{\st}{st}
 \dmo{\Tr}{Tr}
 \dmo{\RO}{RO}
\dmo{\Sp}{Sp}
\dmo{\SO}{SO}
\dmo{\SL}{SL}
\dmo{\GL}{GL}
 \dmo{\Spin}{Spin}
\dmo{\GSp}{GSp}
\nc{\la}{\lambda}
  \nc{\eps}{\varepsilon}
 \nc{\lip}{\langle}
 \nc{\rip}{\rangle}
\nc{\gm}{\gamma}
\dmo{\Perm}{Perm}
\dmo{\Res}{Res}
\dmo{\Ind}{Ind}
\dmo{\ind}{ind}
\dmo{\tr}{tr}
\dmo{\Sym}{Sym}
\dmo{\reg}{reg}
\dmo{\End}{End}
\dmo{\Hom}{Hom}
\dmo{\Pin}{Pin}
\dmo{\Or}{O}
\dmo{\SW}{SW}
\dmo{\orb}{orb}
\title[Stiefel-Whitney Classes]{Stiefel-Whitney Classes of representations of Dihedral Groups} 
\author{Sujeet Bhalerao}
\author{Rohit Joshi}
\author{Neha Malik}
\address{Department of Mathematics, University of Illinois Urbana-Champaign, IL, USA} \email{sujeetbhalerao@gmail.com}
\address{Indian Institute of Science Education and Research, Pune, India} \email{rohitsj@students.iiserpune.ac.in}
\address{Chennai Mathematical Institute, India} \email{nehamalik@cmi.ac.in}  
\keywords{Stiefel-Whitney classes, dihedral groups, lifting representations}
\subjclass{Primary 20G40, Secondary 55R40}
\DeclareMathOperator{\pr}{pr}
\dmo{\ORep}{ORep}
\dmo{\Rep}{Rep}
\newcommand{\as}{\alpha}
\newcommand{\z}{\mathbb{Z}}
\newcommand{\cc}{\mathbb{C}}
\begin{document}

\maketitle
\tableofcontents
\begin{abstract}
We compute the Stiefel-Whitney Classes for representations of dihedral groups $D_{m}$ in terms of character values of order two elements. We also provide criteria to identify representations $V$ which lift to the double covers of the orthogonal group $\Or(V)$  and those with non-trivial mod $2$ Euler class.
\end{abstract}

\section{Introduction}
Let $G$ be a finite group, and $\pi$ be an orthogonal representation of $G$. To $\pi$, one can associate cohomology classes
$w_i(\pi)$, living in $H^i(G,\z/2\z)$, called \textit{Stiefel-Whitney Classes} (SWCs) of $\pi$. Their sum 
$w(\pi)=w_0(\pi)+w_1(\pi)+\hdots$
is known as the \textit{total SWC} of $\pi$. These classes are of importance in studying group cohomology through the representations of the group. We refer to \cite{GKT}, \cite[Section 2.3]{NSSL2} for more details.

The paper \cite{Ganguly} of Ganguly and Spallone computed the second SWC to characterize spinorial representations of symmetric groups. This led to a program of calculating the total SWCs of representations in terms of character values for various groups. Joshi-Ganguly completed the case of $\GL(n,q)$ for $q$ odd in \cite{GJgln}.  The SWCs for $\SL(2,q)$ were computed in \cite{NSSL2} by Malik-Spallone. In this paper, we describe SWCs of representations of dihedral groups in terms of character values. This is the first step in the calculation of SWCs for $\Or(n,q)$, since $\Or(2,q)$ is a dihedral group when $q$ is odd. An important tool in our calculation is the cohomological \emph{detection} of a group by its subgroups.

We write $H^*(G)$ for $H^*(G,\z/2\z).$ The subgroups $K_1,K_2$  of $G$ are said to detect the mod $2$ cohomology of $G$ if the restriction map
$$H^*(G)\to H^*(K_1)\oplus H^*(K_2)$$
is an injection. For instance, it is well known \cite[Chapter II, Corollary 5.2]{milgram} that a Sylow $2$-subgroup $K$ detects the mod $2$ cohomology of $G$, meaning the restriction $H^*(G)$ to $H^*(K)$ is injective. 

Write $C_n$ for the cyclic group of order $n$. Let $D_m$ be the dihedral group of order $2m$ with $`r$' the rotation by angle $2\pi/m$ and a reflection $`s$' as its generators.  All representations of $D_m$ are orthogonal. The calculation of SWCs for abelian dihedral groups $D_1$, $D_2$ is straightforward. We review them in Section \ref{small groups}. When $m$ is a multiple of $4$, from \cite{VS}, the mod $2$ cohomology of $D_m$ is

$$H^*(D_{m}, \Z/2\Z)=\frac{\Z/2\Z[x,y,w]}{(y^2+xy)},  $$
where $x,y$ are certain elements of degree $1$ and $w$ is of degree $2$. In this case, the detection of $H^*(D_m)$ by two Klein-$4$ groups is known (see \cite[Proposition 3.3]{FP} for instance or Section \ref{detect} below). We use this detection to get:

\begin{thm}\label{main}
	Let $m$ be a multiple of $4$, and $G=D_m$. Let $\pi$ be a representation of $G$. Then, 
	$$w(\pi)=(1+y)^{a_\pi}(1+x+y)^{b_\pi}(1+x+w)^{c_\pi},$$
	where 
	\begin{align*}
		a_\pi&=\frac14\big(\chi_\pi(1)-2\chi_\pi(rs)+\chi_\pi(r_c)\big),\\
		b_\pi&=\frac14\big(\chi_\pi(1)-2\chi_\pi(s)+\chi_\pi(r_c)\big), \text{ and }\\
		c_\pi&=\frac14\big(\chi_\pi(1)-\chi_\pi(r_c)\big).
	\end{align*}
\end{thm}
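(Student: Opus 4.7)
My approach follows the detection principle recalled just before the statement. Set $r_c := r^{m/2}$, which is central of order $2$ when $4 \mid m$, and let $K_1 = \langle s, r_c\rangle$ and $K_2 = \langle rs, r_c\rangle$. These are two Klein-four subgroups, and by the detection result the combined restriction
$$H^*(D_m) \hookrightarrow H^*(K_1) \oplus H^*(K_2)$$
is injective. It therefore suffices to verify the asserted identity for $w(\pi)$ after restricting to each $K_i$.

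Writing $H^*(K_i) = \F_2[\alpha_i, \beta_i]$ with $\alpha_i$ dual to the reflection generator and $\beta_i$ dual to $r_c$, I would first record the restrictions of $x$, $y$, $w$ to each $K_i$; these come either from the references \cite{FP, VS} or from a short cocycle-level calculation, and take the shape $y|_{K_1}=0$, $y|_{K_2}=\alpha_2$, $x|_{K_i}=\alpha_i$, and $w|_{K_i}=\beta_i(\alpha_i+\beta_i)$. In particular, this is compatible with the defining relation $y(y+x)=0$ and determines the restriction of each factor on the RHS.

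Next, for a representation $\pi$, I would decompose $\pi|_{K_i}$ into the four characters of $K_i$ via the inner-product formula
$$m_i(\chi) = \frac{1}{4}\sum_{g \in K_i}\chi(g)\chi_\pi(g).$$
The key combinatorial input is that, because $4 \mid m$, the element $sr_c = r^{m/2}s$ is $D_m$-conjugate to $s$ and $rsr_c$ is conjugate to $rs$, so $\chi_\pi(sr_c) = \chi_\pi(s)$ and $\chi_\pi(rsr_c) = \chi_\pi(rs)$. Plugging these in causes the multiplicities to collapse to linear combinations of $\chi_\pi(1)$, $\chi_\pi(s)$, $\chi_\pi(rs)$, $\chi_\pi(r_c)$, and one recognises them as precisely $b_\pi, c_\pi, c_\pi$ on $K_1$ (besides the trivial character) and $a_\pi, c_\pi, c_\pi$ on $K_2$.

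Since each $K_i$ is abelian of exponent $2$, the Whitney sum formula gives $w(\pi|_{K_i})$ as an explicit product of factors $(1 + w_1(\chi))^{m_i(\chi)}$. Matching this with the restriction of the right-hand side reduces to a trivial identity in each $H^*(K_i)$, namely $(1+\beta_i)(1+\alpha_i+\beta_i) = 1 + \alpha_i + \beta_i(\alpha_i+\beta_i)$; detection then upgrades the match to $H^*(D_m)$. The principal obstacle is pinning down the precise restriction formulas for $x, y$, and especially the degree-two class $w$, correctly and consistently with the chosen convention of \cite{VS}; once these are settled, the remaining steps are routine character bookkeeping over $\F_2$.
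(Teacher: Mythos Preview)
Your proposal is correct and follows essentially the same route as the paper: both arguments rest on detection by the two Klein-four subgroups $E_1=\langle s,r_c\rangle$ and $E_2=\langle rs,r_c\rangle$, the same restriction formulas for $x,y,w$, the conjugacies $s\sim sr_c$ and $rs\sim rsr_c$, and the Klein-four multiplicity computation. The only organizational difference is that the paper first decomposes $\pi$ into $D_m$-irreducibles, computes $w(\sigma_k)$ (via detection), and uses the relation $(1+y)(1+x+y)=1+x$ to reach the product form before extracting the character formulas, whereas you verify the claimed identity directly on each $K_i$; your version is slightly more streamlined but not substantively different.
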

\noindent Here $r_c=r^{m/2}$, and $\chi_\pi(g)$ is the character value of the representation $\pi$ at $g\in G$.   \\

There are several corollaries of Theorem \ref{main}. When $m$ is a multiple of $4$, we first have:
\begin{cor}\label{wtriv}
A representation $\pi$ of $D_m$ is trivial if and only if $w(\pi)=1$.
\end{cor}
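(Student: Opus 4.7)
The forward direction is immediate: if $\pi$ is trivial then $\chi_\pi(g)=\dim\pi$ for every $g$, so the formulas in Theorem~\ref{main} give $a_\pi = b_\pi = c_\pi = 0$, whence $w(\pi)=1$.

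For the converse, my plan is to extract $a_\pi=b_\pi=c_\pi=0$ from the single equation $w(\pi)=1$ by peeling off the three generators $(1+y)$, $(1+x+y)$, $(1+x+w)$ in the expression of Theorem~\ref{main} one at a time. First, since $x$ and $w$ commute, the binomial expansion $(1+x+w)^{c_\pi} = \sum_{k=0}^{c_\pi}\binom{c_\pi}{k}(1+x)^{c_\pi-k}w^k$ shows that the coefficient of $w^{c_\pi}$ in $w(\pi)$ equals $(1+y)^{a_\pi}(1+x+y)^{b_\pi}$; under the augmentation $x,y,w\mapsto 0$ this element maps to $1$, so it is nonzero, forcing $c_\pi=0$. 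The equation then reduces to $(1+y)^{a_\pi}(1+x+y)^{b_\pi} = 1$ in $\z/2\z[x,y]/(y^2+xy)$. Applying the ring homomorphism $y\mapsto 0$ (well-defined since $y^2+xy\mapsto 0$) sends this to $(1+x)^{b_\pi}=1$ in $\z/2\z[x]$, which forces $b_\pi=0$. Finally, for the surviving equation $(1+y)^{a_\pi}=1$, I would use the identity $y^k = x^{k-1}y$ for $k\geq 1$ (immediate from $y^2=xy$ by induction) to write $(1+y)^{a_\pi} = 1 + \bigl(\sum_{k=1}^{a_\pi}\binom{a_\pi}{k}x^{k-1}\bigr)y$; the $y$-coefficient must then vanish in $\z/2\z[x]$, and multiplying it by $x$ gives $(1+x)^{a_\pi}=1$ in $\z/2\z[x]$, forcing $a_\pi=0$.

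With $a_\pi=b_\pi=c_\pi=0$ in hand, the character formulas of Theorem~\ref{main} give $\chi_\pi(r_c)=\chi_\pi(s)=\chi_\pi(rs)=\chi_\pi(1)$. Since $\chi_\pi(g)$ is a sum of $\dim\pi$ complex numbers on the unit circle, equality with $\dim\pi$ forces every eigenvalue of $\pi(g)$ to equal $1$, so $g\in\ker\pi$. Hence $r_c, s, rs \in \ker\pi$, and since $\{s, rs\}$ generates $D_m$ (as $(rs)\cdot s = r$), we conclude $\ker\pi=D_m$, i.e., $\pi$ is trivial. The main obstacle I anticipate is the last of the three ring manipulations above: the naive evaluation $x\mapsto 0$ only yields $a_\pi\equiv 0\pmod 2$, so extracting $a_\pi=0$ genuinely requires exploiting the relation $y^2=xy$ rather than merely passing to a polynomial quotient.
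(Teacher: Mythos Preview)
Your argument is correct, and it differs from the paper's in both halves.

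For the step $w(\pi)=1\Rightarrow a_\pi=b_\pi=c_\pi=0$, the paper simply cites Corollary~\ref{W(G)iso}: there the map $\phi:\Z^3\to W(G)$, $(a,b,c)\mapsto(1+y)^a(1+x+y)^b(1+x+w)^c$, is shown to be injective by the substitution $z=x+y$ (so that $H^*(D_m)=\Z/2\Z[z,y,w]/(yz)$) and then reading off the $y$-, $z$-, and $w$-degrees of $\phi(a,b,c)$. You instead peel off the exponents by successive specialisations (top $w$-coefficient, then $y\mapsto 0$, then the identity $y^k=x^{k-1}y$). Both routes work; the paper's change of variables is quicker and simultaneously yields the structural description of $W(G)$, while your approach is self-contained and does not rely on Corollary~\ref{W(G)iso}.

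For the step $a_\pi=b_\pi=c_\pi=0\Rightarrow\pi$ trivial, the paper recalls from the proof of Theorem~\ref{main} that $a_\pi=n_s+n_r+d_e$, $b_\pi=n_s+n_{rs}+d_e$, $c_\pi=d_o$; since these are sums of nonnegative multiplicities, their vanishing kills every nontrivial constituent in the decomposition~\eqref{fpi}. Your route via $\chi_\pi(s)=\chi_\pi(rs)=\dim\pi$ and the unit-circle eigenvalue argument is equally valid and avoids reaching back into the proof of Theorem~\ref{main}, at the cost of a couple of extra lines.
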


 Let $W(G)$ be the subgroup of \emph{complete} cohomology ring $H^\bullet(G)$, generated by the total SWCs of orthogonal representations of $G$ (see \cite[Section 2.6]{NSSL2}). For $G=D_m$, we obtain:

\begin{cor}\label{W(G)iso} 
The group $W(D_m)$ is free abelian with generators $1+y,1+x+y, 1+x+w.$
\end{cor}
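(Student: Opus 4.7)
\emph{Proof proposal.} The plan is to exhibit a group isomorphism $\Z^3 \xrightarrow{\sim} W(D_m)$ sending the standard generators to $1+y,\ 1+x+y,\ 1+x+w$. Theorem \ref{main} gives $W(D_m) \subseteq \langle 1+y,\,1+x+y,\,1+x+w\rangle$ immediately, since every total SWC is a nonnegative-integer product of these three elements. What remains is (i) to show each generator individually lies in $W(D_m)$, and (ii) to prove multiplicative independence in the completed mod-$2$ cohomology ring $H^\bullet(D_m)$.

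For (i), I would feed specific characters into Theorem \ref{main}. Since $4\mid m$, every linear character $\chi$ of $D_m$ satisfies $\chi(r_c)=\chi(r)^{m/2}=1$; the character with $(\chi(r),\chi(s))=(-1,1)$ therefore yields $(a_\pi,b_\pi,c_\pi)=(1,0,0)$, and $(\chi(r),\chi(s))=(-1,-1)$ yields $(0,1,0)$. A two-dimensional irreducible $\rho_k$ with $k$ odd has character values $\chi_{\rho_k}(1)=2$, $\chi_{\rho_k}(s)=\chi_{\rho_k}(rs)=0$, $\chi_{\rho_k}(r_c)=2(-1)^k=-2$, yielding $(0,0,1)$. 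This places the three generators in $W(D_m)$.

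For (ii), suppose $(1+y)^a(1+x+y)^b(1+x+w)^c=1$ in $H^\bullet(D_m)$. The continuous ring quotient $H^\bullet(D_m)\twoheadrightarrow \Z/2[[w]]$ induced by $x,y\mapsto 0$ converts this to $(1+w)^c=1$; since $(1+w)^{2^k}=1+w^{2^k}\ne 1$, the element $1+w$ has infinite multiplicative order in $\Z/2[[w]]^\times$, so $c=0$. The analogous quotient killing $y$ and $w$ gives $(1+x)^b=1$ in $\Z/2[[x]]^\times$, forcing $b=0$ by the same reasoning. Finally $(1+y)^a=1$ must hold in $H^\bullet(D_m)$; killing $w$ and using $y^k=x^{k-1}y$ for $k\ge 1$, one rewrites
\[
(1+y)^a \;=\; 1 + y\cdot\frac{(1+x)^a+1}{x}
\]
inside $\Z/2[[x,y]]/(y^2+xy)$, and linear independence of $\{x^jy\}_{j\ge 0}$ forces $(1+x)^a=1$ in $\Z/2[[x]]$, hence $a=0$.

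The main obstacle is this last step: because the relation $y^2=xy$ couples the two degree-one generators, there is no clean projection onto ``$y$ alone'' (setting $x=0$ only recovers $a\pmod 2$). The binomial rearrangement above is the crucial reduction, converting the question into a polynomial identity $(1+x)^a=1$ in a single-variable ring, where independence is immediate.
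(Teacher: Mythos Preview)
Your argument is correct. The surjectivity half matches the paper's almost verbatim: both exhibit $\chi_r$, $\chi_{rs}$, and an odd $\sigma_k$ as representations realizing the three generators. The injectivity half is where the two proofs diverge.

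The paper first reduces to non-negative exponents (writing any $v\in\Z^3$ as $v_1-v_2$ with $v_i\in\Z^3_{\geq 0}$), then performs the substitution $z=x+y$, so that $H^*(D_m)=\Z/2\Z[z,y,w]/(yz)$ and
\[
\phi(a,b,c)=(1+y)^a(1+z)^b(1+z+y+w)^c.
\]
For non-negative $(a,b,c)$ one can read off the top degrees in $y$, $z$, $w$ as $a+c$, $b+c$, $c$, and simple degree comparison finishes the proof. Your approach instead kills variables in turn via continuous ring quotients---first $x,y\mapsto 0$ to force $c=0$, then $y,w\mapsto 0$ to force $b=0$, and finally the identity
\[
(1+y)^a=1+y\cdot\frac{(1+x)^a+1}{x}
\]
in $\Z/2[[x,y]]/(y^2+xy)$ to force $a=0$. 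This works directly for all integer exponents, with no preliminary reduction to $\Z^3_{\geq 0}$, at the cost of needing that last identity (which is indeed the delicate step, since the relation $y^2=xy$ prevents a na\"{\i}ve projection onto the $y$-axis). The paper's degree argument is a bit slicker once the change of variable is made, since it handles all three coordinates uniformly; your approach is more hands-on but arguably more transparent about where each constraint comes from.
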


For a representation $\pi$ of degree $d$, we define the top SWC $w_{\too}(\pi) := w_d(\pi)$.  When $\det\pi=1$, there is another relevant characteristic class $e(\pi)\in H^d(G,\z)$, called the \emph{Euler class} of $\pi$. From \cite[Property 9.5]{Milnor},  $w_{\too}(\pi)$ is the reduction of $e(\pi)$ mod $2$. Here, we describe $\pi$ with $w_{\too}(\pi)\neq 0$ for $m=0\pmod 4$:

\begin{cor}\label{topswc} 
The top SWC of $\pi$ is non-zero iff either $\ell_\pi=0$ or $m_\pi=0$ where
\begin{align*}
\ell_\pi&=\chi_\pi(1)+2\chi_\pi(s)+\chi_\pi(r_c),\\
m_\pi&=\chi_\pi(1)+2\chi_\pi(sr_c)+\chi_\pi(r_c).
\end{align*}
\end{cor}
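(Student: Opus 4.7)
The plan is to invoke the cohomological detection of $H^*(D_m)$ by the pair of Klein 4-subgroups $K_1=\langle r_c,s\rangle$ and $K_2=\langle r_c,rs\rangle$ (both containing the central involution $r_c$) and thereby translate the non-vanishing of $w_{\too}(\pi)$ into the vanishing of the spaces $\pi^{K_i}$ of $K_i$-invariants.

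Because the restriction $H^*(D_m)\to H^*(K_1)\oplus H^*(K_2)$ is injective, $w_{\too}(\pi)\ne 0$ iff $\res_{K_i}w_{\too}(\pi)\ne 0$ for at least one $i$. By naturality of Stiefel--Whitney classes, $\res_{K_i}w(\pi)=w(\pi|_{K_i})$; since $\deg\pi|_{K_i}=d=\deg\pi$, this restriction coincides with $w_{\too}(\pi|_{K_i})$. Each $K_i\cong(\z/2\z)^2$ has polynomial cohomology $\z/2\z[\alpha_i,\beta_i]$, and orthogonality of $\pi$ together with the elementary-abelian structure of $K_i$ gives $\pi|_{K_i}=\chi_1\oplus\cdots\oplus\chi_d$ as a sum of one-dimensional orthogonal characters. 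Whitney then produces $w_{\too}(\pi|_{K_i})=\prod_j w_1(\chi_j)$, a product of (possibly zero) degree-one classes in a polynomial ring. Such a product is non-zero iff every factor is, iff no $\chi_j$ is trivial, iff $\pi^{K_i}=0$.

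To finish, I would compute $\dim\pi^{K_i}$ by the standard character-sum formula $\tfrac{1}{|K_i|}\sum_{k\in K_i}\chi_\pi(k)$. For $K_1=\{1,r_c,s,sr_c\}$, since $4\mid m$ makes $sr_c=r^{m/2}s$ conjugate to $s$ in $D_m$, the sum equals $\tfrac14(\chi_\pi(1)+\chi_\pi(r_c)+2\chi_\pi(s))=\tfrac14\ell_\pi$. The analogous calculation over $K_2$, whose two non-central reflections lie in a single conjugacy class of $D_m$, yields $\tfrac14 m_\pi$. Combining with the reduction above gives the equivalence $w_{\too}(\pi)\ne 0 \iff \ell_\pi=0$ or $m_\pi=0$.

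The two substantive ingredients---detection by the Klein 4-subgroups (classical; cf.\ \cite{FP} or Section \ref{detect}) and naturality of $w$---are already in hand, so the main care required is the bookkeeping of conjugacy-class representatives inside each $K_i$ to match the character sums to $\ell_\pi$ and $m_\pi$ precisely.
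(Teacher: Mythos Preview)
Your argument is correct and takes a genuinely different route from the paper. The paper works through the explicit decomposition $\pi=n_0 1\oplus n_s\chi_s\oplus n_r\chi_r\oplus n_{rs}\chi_{rs}\oplus\bigoplus d_i\sigma_i$ and the total SWC formula of Theorem~\ref{main}, arguing that $w_{\too}(\pi)\neq 0$ forces $n_0=d_e=0$ and then (via the relation $(1+y)(1+x+y)=1+x$) that one of $n_r,n_{rs}$ must vanish; it then identifies $n_0+n_r+d_e$ and $n_0+n_{rs}+d_e$ with the character sums $\tfrac14\ell_\pi$ and $\tfrac14 m_\pi$ over $E_1,E_2$. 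Your proof bypasses Theorem~\ref{main} entirely: you use only detection by the two Klein~4-subgroups, the fact that $H^*((\z/2\z)^2)$ is a polynomial ring (hence an integral domain, so a product of degree-one classes vanishes iff some factor does), and the character formula for $\dim\pi^{K_i}$. This is cleaner and more conceptual, and it would work verbatim for any group whose mod~$2$ cohomology is detected by elementary abelian $2$-subgroups. The paper's approach, by contrast, also yields the explicit value of $w_{\too}(\pi)$ (e.g.\ $x^{n_s}(x+y)^{n_{rs}}w^{d_o}$ when $n_0=d_e=n_r=0$), not merely its non-vanishing. One bookkeeping remark: your character sum over $K_2=\{1,r_c,rs,rsr_c\}$ gives $\chi_\pi(1)+\chi_\pi(r_c)+2\chi_\pi(rs)$, which matches the paper's \emph{proof} (summing over $E_2$); the appearance of $sr_c$ rather than $rs$ in the displayed definition of $m_\pi$ is a notational quirk you should reconcile explicitly, since for $4\mid m$ the element $sr_c$ is conjugate to $s$, not to $rs$.
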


For a complex vector space $V$, there are four double covers of the orthogonal group $\Or(V)$, namely $\Or(V)\times \z/2\z$, $\widetilde\Or(V)$, $\Pin^+(V)$ and $\Pin^-(V)$ (see \cite[Section 12]{SJext} for instance). We say an orthogonal representation $\pi$ of a group $G$ is \textit{spinorial} provided it lifts to $\Pin^-(V)$, as in \cite{JSP}. There is a cohomological criterion to see when $\pi$ lifts to the extensions above. For instance, $\pi$ always lifts to $\Or(V)\times \z/2\z$. We review this in Section \ref{spindef}.

When $4$ divides $m$, we obtain the following lifting criteria for the representations of $D_m$ in terms of character values:

\begin{cor}\label{spin}
A representation $(\pi,V)$ of $D_m$ lifts to:
\begin{roster}
\item $\widetilde \Or(V)$ iff  $a_\pi= b_\pi= c_\pi\pmod 2,$

\item $\Pin^+(V)$ iff  $c_\pi$ is even and       $$
	\binom{a_\pi}{2}= \binom{b_\pi}{2}= \binom{c_\pi }{2}\pmod 2
	,$$

\item $\Pin^-(V)$ iff $c_\pi$ is even and                 $$
	\binom{a_\pi+1}{2}= \binom{b_\pi+1}{2}= \binom{c_\pi + 1}{2}\pmod 2.
	$$
\end{roster}
\end{cor}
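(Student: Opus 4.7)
The plan is to translate each lifting property into the vanishing of a mod~$2$ characteristic class and then read off that class from the total SWC formula of Theorem~\ref{main}. Writing $a,b,c$ for $a_\pi,b_\pi,c_\pi$: by the discussion in Section~\ref{spindef}, a representation $\pi$ lifts to $\widetilde\Or(V)$ (resp.\ $\Pin^+(V)$, $\Pin^-(V)$) precisely when $w_1(\pi)=0$ (resp.\ $w_2(\pi)=0$, $w_2(\pi)+w_1(\pi)^2=0$) in $H^*(D_m)$. So the proof reduces to a direct expansion of $w(\pi)$ through degree~$2$.

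The key calculation is the degree-$2$ expansion of
\[
w(\pi)=(1+y)^{a}(1+x+y)^{b}(1+x+w)^{c}.
\]
Only the last factor is non-homogeneous: its degree-$1$ part is $cx$ and its degree-$2$ part is $cw+\binom{c}{2}x^{2}$. Collecting contributions and simplifying using the ring relation $y^{2}=xy$, one obtains
\[
w_1(\pi)=(b+c)\,x+(a+b)\,y,
\]
\[
w_2(\pi)=\Bigl(\tbinom{b}{2}+\tbinom{c}{2}+bc\Bigr)x^{2}+\Bigl(\tbinom{a}{2}+\tbinom{b}{2}+ac+bc\Bigr)xy+cw,
\]
\[
w_1(\pi)^{2}=(b+c)\,x^{2}+(a+b)\,xy.
\]
Since $\{x,y\}$ is a basis of $H^{1}(D_m)$ and $\{x^{2},xy,w\}$ is a basis of $H^{2}(D_m)$, each class vanishes iff every coefficient vanishes mod~$2$.

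Part~(i) is then immediate from the expression for $w_1(\pi)$: it is zero iff $a\equiv b\equiv c\pmod 2$. For (ii), vanishing of the $w$-coefficient of $w_2(\pi)$ forces $c$ to be even, which kills the cross-terms $ac$ and $bc$ mod~$2$ and collapses the $x^{2}$ and $xy$ conditions to $\binom{b}{2}\equiv\binom{c}{2}$ and $\binom{a}{2}\equiv\binom{b}{2}\pmod 2$. For (iii), I add $w_1(\pi)^{2}$ to $w_2(\pi)$, so the $x^{2}$-coefficient becomes $\binom{b}{2}+\binom{c}{2}+bc+b+c$ and the $xy$-coefficient becomes $\binom{a}{2}+\binom{b}{2}+ac+bc+a+b$; once again $c$ must be even, and then the identity $\binom{n+1}{2}=\binom{n}{2}+n$ rewrites the remaining congruences as $\binom{a+1}{2}\equiv\binom{b+1}{2}\equiv\binom{c+1}{2}\pmod 2$.

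The only real obstacle is the bookkeeping in the degree-$2$ expansion: the non-homogeneous factor $(1+x+w)^{c}$ contributes simultaneously to $x^{2}$ and $w$, and the relation $y^{2}=xy$ must be applied consistently before reading off coordinates in the chosen basis of $H^{2}(D_m)$. Once this is done, (i)--(iii) drop out from the linear independence of the basis elements in $H^{1}(D_m)$ and $H^{2}(D_m)$.
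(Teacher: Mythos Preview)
Your argument is correct and follows essentially the same route as the paper's proof: expand $w(\pi)$ through degree~$2$, then read off the coefficients against a basis of $H^1$ and $H^2$. One small slip: the lifting criterion for $\widetilde\Or(V)$ in Section~\ref{spindef} is $w_1(\pi)^2=0$, not $w_1(\pi)=0$---these happen to coincide here because squaring $H^1(D_m)\to H^2(D_m)$ is injective (your own formula $w_1(\pi)^2=(b+c)x^2+(a+b)xy$ shows this), so your conclusion for~(i) is unaffected.
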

We also have such results for dihedral groups $D_m$ when $m$ is odd or $m=2\pmod 4$, proved in later sections.\\

 Let $G, G'$ be finite groups. Let $\pi, \pi'$ be orthogonal representations of $G$ and $G'$ respectively with $\deg \pi=d$, $\deg\pi'=d'$. Then $\Pi=\pi\boxtimes\pi'$, the external tensor product, is an orthogonal representation of $G\times G'$. We have:

\begin{thm}\label{prodDm}
 The representation $\Pi$ is spinorial if and only if both of the following hold:
\begin{roster}
\item The restriction of $\Pi$ to each of $G\times 1$ and $1\times G'$ is spinorial, and
\item $(dd'+1)$ is even or $\det\pi=1$ or $\det\pi'=1$.
\end{roster}
\end{thm}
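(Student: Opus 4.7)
The plan is to reduce spinoriality of $\Pi$ to the vanishing of a single class in $H^2(G\times G',\Z/2)$ and then separate that class into its Künneth components, one for each of the conditions to be matched. Recall from Section~\ref{spindef} that an orthogonal representation $\rho$ is spinorial (i.e.\ lifts to $\Pin^-$) if and only if
\begin{equation*}
\mathrm{sp}(\rho):=w_2(\rho)+w_1(\rho)^2\;=\;0.
\end{equation*}

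First, apply the splitting principle to the external tensor product $\Pi=p_1^*\pi\otimes p_2^*\pi'$, where $p_1,p_2$ are the projections of $G\times G'$, to obtain, modulo~$2$,
\begin{align*}
w_1(\Pi)&=d'\,w_1(\pi)+d\,w_1(\pi'),\\
w_2(\Pi)&=d'\,w_2(\pi)+d\,w_2(\pi')+\tbinom{d'}{2}w_1(\pi)^2+\tbinom{d}{2}w_1(\pi')^2+(dd'-1)\,w_1(\pi)\,w_1(\pi'),
\end{align*}
where $w_1(\pi),w_1(\pi')$ are identified with their pullbacks to $H^1(G\times G',\Z/2)$. Squaring $w_1(\Pi)$ (the cross term $2dd'\,w_1(\pi)w_1(\pi')$ vanishes) and adding gives, via $\binom{k}{2}+k=\binom{k+1}{2}$,
\begin{equation*}
\mathrm{sp}(\Pi)=\Bigl(d'\,w_2(\pi)+\tbinom{d'+1}{2}w_1(\pi)^2\Bigr)+\Bigl(d\,w_2(\pi')+\tbinom{d+1}{2}w_1(\pi')^2\Bigr)+(dd'-1)\,w_1(\pi)\,w_1(\pi').
\end{equation*}

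Second, because $\Pi|_{G\times 1}=\pi^{\oplus d'}$, the Whitney-sum formula yields $\mathrm{sp}(\pi^{\oplus d'})=d'\,w_2(\pi)+\binom{d'+1}{2}w_1(\pi)^2$, which is exactly the first bracket; the second bracket similarly equals $\mathrm{sp}(\Pi|_{1\times G'})$. By the Künneth theorem,
\begin{equation*}
H^2(G\times G',\Z/2)\;=\;H^2(G)\;\oplus\;\bigl(H^1(G)\otimes H^1(G')\bigr)\;\oplus\;H^2(G'),
\end{equation*}
the three summands of $\mathrm{sp}(\Pi)$ lie in these three pieces respectively. So $\mathrm{sp}(\Pi)=0$ iff each component vanishes: the first two pieces correspond exactly to spinoriality of the two restrictions (condition~(i)), while the cross term $(dd'-1)\,w_1(\pi)\otimes w_1(\pi')$ vanishes iff $dd'-1\equiv 0\pmod 2$, or $w_1(\pi)=0$, or $w_1(\pi')=0$—that is, iff $dd'+1$ is even, or $\det\pi=1$, or $\det\pi'=1$ (condition~(ii)).

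The main step is the algebraic rearrangement, via $\binom{k}{2}+k=\binom{k+1}{2}$, that lets the first two Künneth components of $\mathrm{sp}(\Pi)$ be recognized as the spinoriality classes of the restrictions $\Pi|_{G\times 1}$ and $\Pi|_{1\times G'}$; once that is done, the Künneth decomposition reads off the three conditions directly.
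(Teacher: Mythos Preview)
Your proof is correct and follows essentially the same route as the paper: decompose $w_2(\Pi)+w_1(\Pi)^2$ into its three K\"unneth components, identify the two ``pure'' components with the spinoriality obstructions of $\Pi|_{G\times1}$ and $\Pi|_{1\times G'}$ via the identity $\binom{k}{2}+k=\binom{k+1}{2}$, and read off condition~(ii) from the cross term. The only difference is that the paper quotes the three vanishing conditions directly from \cite[Section~7.1]{Ganguly} and then proves Lemma~\ref{cond}, whereas you derive those same three terms from scratch using the splitting principle; your argument is thus a self-contained version of the paper's proof.
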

When $G=D_{m}$ and $G'= D_{m'}$, it leads to:

\begin{cor}\label{aspin}
Suppose $m,m'$ both are multiples of $4$. All non-trivial irreducible representations of $D_{m}\times D_{m'}$ are aspinorial.

\end{cor}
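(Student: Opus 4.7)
The plan is to reduce the claim to a finite check on irreducible representations of a single dihedral group, using Theorem~\ref{prodDm}(i) together with Corollary~\ref{spin}(iii). Every irreducible representation of $D_m\times D_{m'}$ has the form $\Pi=\pi\boxtimes\pi'$ with $\pi,\pi'$ irreducible, and $\Pi$ is non-trivial precisely when at least one of the factors, say $\pi$, is non-trivial. By Theorem~\ref{prodDm}(i), spinoriality of $\Pi$ forces the restriction of $\Pi$ to $D_m\times\{1\}$, namely the representation $d'\pi$ with $d'=\deg\pi'\in\{1,2\}$, to be spinorial. Thus it suffices to prove: for every non-trivial irreducible representation $\pi$ of $D_m$ (with $4\mid m$), neither $\pi$ nor $2\pi$ is spinorial.

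Next I would enumerate the irreducible representations of $D_m$: the four one-dimensional characters, indexed by $(\chi(r),\chi(s))\in\{\pm1\}^2$, and the two-dimensional representations $\rho_j$ for $1\le j\le m/2-1$. Using Theorem~\ref{main}, I would compute $(a_\pi,b_\pi,c_\pi)$ for each. The inputs are the character values at $1,s,rs,r_c$: for a one-dimensional $\chi$ we have $\chi(r_c)=\chi(r)^{m/2}=1$ since $m/2$ is even, while for $\rho_j$ the character vanishes on all reflections and $\chi_{\rho_j}(r_c)=2(-1)^j$. A short tabulation yields
\[
(a_\pi,b_\pi,c_\pi)\in\{(0,0,0),(1,1,0),(1,0,0),(0,1,0),(0,0,1)\},
\]
with $(0,0,0)$ occurring only for the trivial character. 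In particular, the triple always lies in $\{0,1\}^3$, and its three entries are all equal if and only if $\pi$ is trivial.

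Finally I would apply the spinoriality test. For $\pi$ itself, Corollary~\ref{spin}(iii) requires $c_\pi$ even and $\binom{a_\pi+1}{2}\equiv\binom{b_\pi+1}{2}\equiv\binom{c_\pi+1}{2}\pmod 2$; since $\binom{1}{2}=0$ and $\binom{2}{2}=1$, the congruence forces $a_\pi=b_\pi=c_\pi$ among values in $\{0,1\}$, and together with the parity condition this forces $(a_\pi,b_\pi,c_\pi)=(0,0,0)$. For the representation $2\pi$, whose triple is $(2a_\pi,2b_\pi,2c_\pi)$, the parity condition is automatic and, using $\binom{2n+1}{2}=n(2n+1)\equiv n\pmod 2$, the binomial congruence collapses to $a_\pi\equiv b_\pi\equiv c_\pi\pmod 2$, which again forces $(a_\pi,b_\pi,c_\pi)=(0,0,0)$. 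Hence for any non-trivial $\pi$, neither $\pi$ nor $2\pi$ is spinorial, condition (i) of Theorem~\ref{prodDm} fails for $\Pi$, and $\Pi$ is aspinorial. The only genuinely routine obstacle is the mechanical computation of the $(a,b,c)$-table for $\rho_j$ in the two parities of $j$; everything else is formal manipulation.
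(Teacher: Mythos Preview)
Your argument is correct and follows the same route the paper indicates: apply Theorem~\ref{prodDm} and reduce to the spinoriality test of Corollary~\ref{spin}(iii) for multiples of an irreducible of a single factor. The paper only records this as ``a straightforward calculation''; your write-up supplies exactly those details, and the paper's Example at the end of Section~\ref{liftOV} already notes the $d'=1$ half (no non-trivial irreducible of $D_m$ is spinorial), leaving only your $2\pi$ check as the additional step.
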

This paper is organized as follows. In section \ref{notpre}, we set up notation and review the group cohomology of $D_{m}$. In section \ref{small groups}, we review the calculation of SWCs for $C_2$ and obtain SWCs for representations of $C_2 \times C_2$. In section \ref{D2m}, we prove Theorem \ref{main} and its Corollaries \ref{wtriv}, \ref{W(G)iso}.  Sections \ref{TSWCsec}, \ref{liftOV} are dedicated to proving Corollaries \ref{topswc}, \ref{spin} along with similar results for $D_m$ when $m$ is odd or $m=2\pmod 4$. In the last section, we give a general spinoriality result for the products of groups, leading to the proofs of Theorem \ref{prodDm} and Corollary \ref{aspin}.

{\bf Acknowledgments:} The authors would like to thank Steven Spallone for helpful conversations. The contribution of the first author comes from his MS Thesis, at the Indian Institute of Science Education and Research (IISER) Pune, India. The second author of this paper is a post doctoral fellow at IISER Pune, India and was supported by National Board of Higher Mathematics fellowship, India. The third author gratefully acknowledges the support of Institute fellowships from IISER Mohali and Chennai Mathematical Institute, India, during this work.

\section{Notation and Preliminaries}\label{notpre}

 Let $D_{m}$ be the dihedral group of order $2m $ with  presentation
$$D_{m}=\langle r,s\mid r^{m}=1, s^2=1, rs=sr^{-1}\rangle.$$

 We now review the representation theory of $D_{m}$ (see \cite[Section 5.3]{serre1977linear} for instance).

\subsection{Irreducible Representations of $D_{m}$} 
Write $`\sgn$' for the non-trivial linear character of $D_1$.  

For $m $ even, the group $D_{m}$ has four linear characters, namely $1$, $\chi_s$, $\chi_r$, $\chi_{rs}$:
\begin{align*}
1:(r,s)&\mapsto (1,1)\\
\chi_s:(r,s)&\mapsto (1,-1)\\
\chi_r:(r,s)&\mapsto (-1,1)\\
\chi_{rs}:(r,s)&\mapsto (-1,-1).
\end{align*}

Set $\theta_k=\frac{2\pi k}{m}$. 
There are also irreducible $2$-dimensional representations enumerated by $k=1,2,\hdots,m/2-1$:
$$\sigma_k:D_{m}\to \GL(2,\cc)$$ is given by $$\sigma_k(r)=\begin{pmatrix}
	\cos\theta_k&&-\sin \theta_k\\[2mm]
	\sin\theta_k&&\cos\theta_k
\end{pmatrix}\quad,\quad\sigma_k(s)=\begin{pmatrix}
	0&&1\\
	1&&0
\end{pmatrix}.$$

Write $\sigma=\sigma_1$; this is the standard $2$-dimensional representation of $D_{m}$.

When $m$ is odd, $D_{m}$ has two linear characters $1$, and $\chi_s$. Also, the representations $\sigma_k$ for $k=1,\hdots,(m-1)/2$ defined above are irreducible.

Note that all the above representations are orthogonal.

\subsection{Restriction to a Sylow $2$-Subgroup}\label{ressyl}

Set $\chi_0=1$. In this section, we write $\chi_{i, m}$ for the linear characters $\chi_i$ and $\sigma_{k,m}$ for the representations $\sigma_k$ of the group $D_{m}$.

 Assume $m=2^{n}l$ for some $n\in \mathbb{N}$ and $l$ odd. $D_{2^n}$, with usual inclusion, is a Sylow $2$-subgroup of $D_{m}$. It is known \cite[Chapter 1, Section 4]{VS} that 

\begin{equation}\label{cohomD2n} H^*(D_{2^n})=\begin{cases}
\z/2\z[v]& n=0,\\
\z/2\z[v_1,v_2]& n=1,\\
\z/2\z[x,y,w]/(y^2+xy)& n\geq 2.
\end{cases}\end{equation}
where $v=w_1(\chi_{s,1})$, $v_1=w_1(\chi_{r,2})$, $v_2=w_1(\chi_{s,2})$, and $x= w_1(\chi_{s,2^n}), $ $y=w_1(\chi_{r,2^n})$ and $w=w_2(\sigma_{1,2^n})$ for $n\geq 2$.
%where $x= w_1(\chi_{s,2^n}), y=w_1(\chi_{r,2^n})$ and $w=w_2(\sigma_{1,2^n})$.

Consider the inclusion $\iota: D_{2^n} \to D_{m}$. Then, the restriction map\begin{equation}\label{incl}\iota^*:H^*(D_{m})\to H^*(D_{2^n})\end{equation}
is an isomorphism \cite[Theorem 4.6]{VS}. To understand $\iota^*$, the restrictions of representations of $D_{m}$ to $D_{2^n}$ are of interest. Below we define elements of $H^*(D_{m})$ which map to the generators of $H^*(D_{2^n})$ under $\iota^*$.
% Here, we also understand the generators of $H^*(D_{2m})$ as the SWCs of representations of $D_{2m}$.
This is done in three cases.

For $m$ odd, the linear character $\chi_{s,m}$ restricted to $D_{1}$ is the $\sgn$ representation. Then, the functoriality of SWCs gives
\begin{align*}
\iota^*(w_1(\chi_{s,m}))&=w_1(\sgn)\\
&=v.
\end{align*}
Without ambiguity, we simply write \begin{equation}\label{v}w_1(\chi_{s,m})=v.\end{equation}

Suppose now that $m = 2 \pmod 4$. The restriction of $\chi_{i,m}$ to $D_2$ is $\chi_{i,2}$ for each $i=0,r,s,rs$. Therefore, 
$\iota^*(w_1(\chi_{r,m}))=v_1,$ and
$\iota^*(w_1(\chi_{s,m}))=v_2$
and we write \begin{equation}\label{v1v2} 
\begin{split}w_1(\chi_{r,m})&=v_1, \text{ and }\\
w_1(\chi_{s,m})&=v_2.
\end{split}
\end{equation}% in $H^*(D_{2m})$.

Finally, suppose $m = 0 \pmod 4$. In this case, $\chi_{i,m}$ restricts to $ \chi_{i, 2^n}$ and $\sigma_{1,m}$ restricts to $\sigma_{1,2^n}$. Again, since $\iota^*$ is an isomorphism, we write

\begin{equation}\label{xyw} 
\begin{split}
w_1(\chi_{s,m})&=x\\
w_1(\chi_{r,m})&=y\\
w_1(\sigma_{1,m})&=w.
\end{split}
\end{equation}

With notation given by \eqref{v},\eqref{v1v2} and \eqref{xyw}, an equality
similar to \eqref{cohomD2n} holds for $H^*(D_{m})$. Henceforth, we use the same set of notations for the generators of group cohomology of $D_m$ and $D_{2^n}$.

\subsection{\texorpdfstring{Extensions of \( \Or(V) \)}{Extensions of Or(V)}} \label{spindef}
Consider an orthogonal representation $\pi:G\to \Or(V)$. Here, $V$ is a finite-dimensional (complex) vector space, with a nondegenerate quadratic form $Q$.   Denote a classifying space of $\Or(V)$ by BO$(V)$. An explicit one-to-one correspondence of the elements of $H^2($BO$(V),\z/2\z)$ with the extensions of $\Or(V)$ by $\z/2\z$ is known (see, for instance, \cite{SJext}). There are four mutually inequivalent such extensions of $\Or(V)$, namely $\Or(V)\times \z/2\z$, $\widetilde\Or(V)$, $\Pin^+(V)$ and $\Pin^-(V)$.

 The first one is easy to describe. For $\widetilde\Or(V)$, consider the determinant map from $\Or(V)$, which surjects onto $C_2 = \{\pm 1\}$. Of course, squaring the cyclic group $C_4 < \mb C^\times$, generated by the imaginary unit $i$, gives a $\z/2\z$-cover of $C_2$. 
Write $\widetilde \Or(V) \to \Or(V)$ for the pullback:
\beq 
\xymatrix{
	\widetilde \Or(V)   \ar[d]   \ar[r]  &  C_4  \ar[d]^{z \mapsto z^2}\\
	\Or(V)  \ar[r]^{\det} & C_2}
\eeq 
Thus $\widetilde \Or(V)$ is the subgroup of  pairs $(g,z) \in \Or(V) \times C_4$ with $\det g=z^2$.

For a vector space $(V,Q)$, the Clifford algebra $C(V)$ is the quotient of the tensor algebra $T(V)$ by the two-sided ideal generated by the set
\beq
\{ v\otimes v -Q(v): v\in V\}.
\eeq
It contains $V$ as a subspace.  Write $C(V)^\times$ for the group of invertible elements of $C(V)$. We say $u \in V$ is a \emph{unit vector} if $Q(u)=1$, and an \emph{antiunit vector}, if $Q(u)=-1$. 

 Write $\Pin^-(V)$ for the subgroup of $C(V)^\times$ generated by the antiunit vectors in $V \subset C(V)$, and
	$\Pin^+(V)$ for the subgroup generated by the unit vectors. This group $\Pin^-(V)$ agrees with the one described in \cite[Chapter 20]{FH}, and $\Pin^+(V)$ with \cite[Appendix 1]{froh}.

 Moreover, the following cohomological criterion is known (see \cite[Corollary 12.2.1]{SJext} for instance): The representation $(\pi,V)$ lifts to:

\begin{enumerate}
\item $\widetilde\Or(V)$ iff $w_1(\pi)^2=0$,
\item $\Pin^+(V)$ iff $w_2(\pi)=0$, and
\item $\Pin^-(V)$ iff $w_2(\pi)+w_1(\pi)^2=0$.
\end{enumerate}

\section{The case of $C_2$ and $C_2 \times C_2$}\label{small groups}

 Let $C_2=\{\pm 1\}$. Then, $D_1$ is simply $C_2$ and $D_2$ is the Klein-$4$ group $C_2\times C_2$. Below are the formulas for their SWCs.

\subsection{The Cyclic group $C_2$}
Let $\pi$ be a representation of $C_2$. The total SWC of $\pi$ is known (see, for instance, in \cite[Lemma 2.5]{NSSL2}). With $v=w_1(\sgn)$, we have
%The total SWC of a representation $\pi$ of $C_2$ (or $D_1$) is
\begin{equation}\label{swcC2}w(\pi)=(1+v)^{t_\pi},\end{equation}
where $t_\pi=\frac 12(\chi_\pi(1)-\chi_\pi(-1))$.

\subsection{The Klein-$4$ Group}\label{C2C2}
Consider the group $C_2\times C_2$ and the projection maps $\pr_i:C_2\times C_2\to C_2$ for $i=1,2$.
 By K\"unneth, we have
$$H^*(C_2\times C_2)\cong \z/2\z[v_1,v_2],$$
where $v_1=\pr^*_1(v)=w_1(\sgn\boxtimes 1)$ and $v_2=\pr^*_2(v)=w_1(1\boxtimes \sgn)$. Here $\boxtimes$ denotes the external tensor product.

\begin{prop}\label{C2C2SWC}
Let $\pi$ be a representation of $C_2\times C_2$. The total SWC of $\pi$ is
$$w(\pi)=(1+v_1)^{b_1}(1+v_2)^{b_2}(1+v_1+v_2)^{b_3},$$
where
\begin{align*}
b_1&=\frac14\left(\deg\pi-\chi_\pi(-1,1)+\chi_\pi(1,-1)-\chi_\pi(-1,-1)\right)\\
b_2&=\frac14\left(\deg\pi+\chi_\pi(-1,1)-\chi_\pi(1,-1)-\chi_\pi(-1,-1)\right)\\
b_3&=\frac14\left(\deg\pi-\chi_\pi(-1,1)-\chi_\pi(1,-1)+\chi_\pi(-1,-1)\right).
\end{align*}

\end{prop}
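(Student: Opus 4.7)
The plan is to decompose $\pi$ as a direct sum of irreducibles, apply the Whitney product formula to each summand, and then identify the multiplicities via the character orthogonality relations on $C_2\times C_2$.

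First I would list the four (one-dimensional, orthogonal) irreducible characters of $C_2\times C_2$: the trivial character $1\boxtimes 1$, the characters $\sgn\boxtimes 1$ and $1\boxtimes\sgn$ (with $w_1$ equal to $v_1$ and $v_2$ respectively by the definitions in Section \ref{C2C2}), and $\sgn\boxtimes\sgn$. For this last one I would use that for one-dimensional representations $\chi,\chi'$ the identity $\chi\boxtimes\chi'=(\chi\boxtimes 1)\otimes(1\boxtimes\chi')$ holds, together with the fact that the first SWC of a tensor product of line bundles is the sum of the first SWCs, to conclude $w_1(\sgn\boxtimes\sgn)=v_1+v_2$. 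Hence the total SWCs of the four irreducibles are $1$, $1+v_1$, $1+v_2$, and $1+v_1+v_2$.

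Next, writing $\pi\cong a_{00}(1\boxtimes 1)\oplus a_{10}(\sgn\boxtimes 1)\oplus a_{01}(1\boxtimes\sgn)\oplus a_{11}(\sgn\boxtimes\sgn)$, the Whitney product formula applied to the direct sum immediately gives
$$w(\pi)=(1+v_1)^{a_{10}}(1+v_2)^{a_{01}}(1+v_1+v_2)^{a_{11}}.$$
It then remains to compute $a_{10},a_{01},a_{11}$ by the usual orthogonality formula
$$a_{ij}=\frac{1}{4}\sum_{g\in C_2\times C_2}\chi_\pi(g)\overline{\chi_{ij}(g)},$$
noting that $\chi_\pi(1,1)=\deg\pi$ and that each $\chi_{ij}$ takes values in $\{\pm 1\}$. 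A direct expansion yields exactly $a_{10}=b_1$, $a_{01}=b_2$, and $a_{11}=b_3$, completing the proof.

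There is no serious obstacle in this argument; it is essentially a two-line character computation combined with Whitney multiplicativity. The only point that requires a moment's care is the identification $w_1(\sgn\boxtimes\sgn)=v_1+v_2$, which must be justified either via the tensor-product formula for $w_1$ of line bundles or, equivalently, by noting that the restriction of $\sgn\boxtimes\sgn$ to the diagonal and to each factor forces the stated value under the Künneth isomorphism $H^*(C_2\times C_2)\cong \z/2\z[v_1,v_2]$.
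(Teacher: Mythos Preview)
Your proof is correct and follows essentially the same approach as the paper: decompose $\pi$ into irreducibles, apply multiplicativity of SWCs, and recover the multiplicities from character values. The only cosmetic difference is that you invoke the orthogonality relations directly while the paper writes out and solves the $4\times 4$ linear system, and you spell out the justification of $w_1(\sgn\boxtimes\sgn)=v_1+v_2$ that the paper leaves implicit.
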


\begin{proof}

 Any representation $\pi$ of $C_2\times C_2$ has the form
$$\pi=b_01\oplus b_1(\sgn\boxtimes 1)\oplus b_2(1\boxtimes \sgn)\oplus b_3(\sgn\boxtimes\sgn).$$
Note all representations of $G$ are orthogonal.
By multiplicativity of SWCs, we have
$$w(\pi)=(1+v_1)^{b_1}(1+v_2)^{b_2}(1+v_1+v_2)^{b_3}.$$
To express $b_i$ in terms of character values, we use the following equations:
\begin{align*}
\chi_\pi(1,1)&=b_0+b_1+b_2+b_3\\
\chi_\pi(-1,1)&=b_0-b_1+b_2-b_3\\
\chi_\pi(1,-1)&=b_0+b_1-b_2-b_3\\
\chi_\pi(-1,-1)&=b_0-b_1-b_2+b_3.
\end{align*}
Solving these for $b_i$ completes the proof.

\end{proof}

\section{Main Results}\label{D2m} 

Recall $D_{2^n}$ sits in $D_m$ under the inclusion $\iota$, assuming $m=2^nl$ with $n\in\mathbb{N}$ and $l$ odd. As the SWCs are functorial and the restriction map $\iota^*$, in \eqref{incl}, is an isomorphism, it suffices to work with $D_{2^n}$ to find SWCs for $D_m$.

The total SWC for $D_1$ is given in Equation \eqref{swcC2}, and the same formula with $$v=w_1(\chi_s),\text { and }t_\pi=\frac12\left(\chi_\pi(1)-\chi_\pi(s)\right)$$ holds for $D_m$ when $m$ is odd, due to the isomorphism \eqref{incl}.

Let $r_c=r^{m/2}$. For dihedral group with $m= 2\pmod 4$, we can identify its detecting subgroup $D_2=\{1,r_c,s,sr_c\}$ with $C_2\times C_2$ as $r_c\leftrightarrow(-1,1)$, $s \leftrightarrow (1,-1)$. Then, Proposition \ref{C2C2SWC} translates to give the SWCs for these dihedral groups. For instance, in this case, with $\pi$ a representation of $D_m$, we have 
$$b_1=\frac14\left(\deg\pi-\chi_\pi(r_c)+\chi_\pi(s)-\chi_\pi(sr_c)\right).$$

\subsection{Detection}\label{detect} 
In this section, suppose $m$ is a multiple of $4$, and $G=D_{m}$. 

 We consider the following subgroups of $G$:
\begin{align*}
E_1&=\{1,s,r_c,sr_c\},\\
E_2&=\{1,rs,r_c,rsr_c\}.
\end{align*}
Both $E_1,E_2$ are isomorphic to the Klein $4$-group. Let $\as_1,\beta_1$ be the linear characters of $E_1$ given by
\begin{align*}
\as_1:(s,r_c)&\mapsto(-1,1)\\
 \beta_1:(s,r_c)&\mapsto(1,-1).
\end{align*}Then, $H^*(E_1)\cong \z/2\z[v_1,v_2],$ with $v_1=w_1(\as_1),$
$v_2=w_1(\beta_1)$. 
Similarly, we consider the linear characters $\as_2,\beta_2$ of $E_2$ defined by,
\begin{align*} 
\as_2:(rs,r_c)&\mapsto(-1,1)\\
  \beta_2:(rs,r_c)&\mapsto(1,-1)\end{align*}
such that $H^*(E_2)\cong \z/2\z[u_1,u_2]$ with $u_1=w_1(\as_2)$, $u_2=w_1(\beta_2)$.\\

The following detection can be found in \cite[Proposition 3.3, Page 322]{FP}.  Although the result itself is accurate, an error is found within the accompanying proof provided in this book. We rectify the issue in the proof below. %The result is true but there is an error in the proof in this book, so we are giving the following correct proof.

\begin{prop}\label{e1e2} When $m$ is a multiple of $4$, the subgroups $E_1$, $E_2$ together detect the mod $2$ cohomology of $D_{m}$. %That is, the restriction map

\end{prop}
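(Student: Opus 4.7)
The plan is to reduce to the Sylow $2$-subgroup $D_{2^n}$ (where $m = 2^n l$, $l$ odd, $n \geq 2$) via the restriction isomorphism \eqref{incl}, then show that the kernel of the combined restriction $\res = (\res_1, \res_2)\colon H^*(D_m) \to H^*(E_1)\oplus H^*(E_2)$ is zero by using the explicit presentation $H^*(D_m) = \Z/2\Z[x,y,w]/(y^2+xy)$ together with a direct computation of $\res_i$ on the generators.

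First, I would compute the restrictions of the generators. Since $m/2$ is even, $\chi_r(r_c) = 1$; on $E_1$ the character $\chi_r$ is identically $1$ while $\chi_s|_{E_1} = \as_1$, so $\res_1(y) = 0$ and $\res_1(x) = v_1$. To handle $w$, I would restrict $\sigma_1$ to $E_1$: since $\sigma_1(r_c) = -I$ and $\sigma_1(s)$ is an involution with eigenvalues $\pm 1$, the decomposition reads $\sigma_1|_{E_1} = \beta_1 \oplus \as_1\beta_1$. Multiplicativity then yields
\[
\res_1(w) = w_2(\sigma_1|_{E_1}) = v_2^2 + v_1 v_2.
\]
An entirely analogous calculation on $E_2$ (using $\sigma_1(rs)$ in place of $\sigma_1(s)$, giving $\sigma_1|_{E_2}=\beta_2\oplus \as_2\beta_2$) gives $\res_2(x) = u_1$, $\res_2(y) = u_1$, and $\res_2(w) = u_2^2 + u_1 u_2$.

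Next, I would exploit the relation $y^2 = xy$ to write every element of $H^*(D_m)$ uniquely as $p(x,w) + y\,q(x,w)$ for polynomials $p, q \in \Z/2\Z[x,w]$. Applying $\res_1$ kills the $y$-part and yields $p(v_1, v_2^2 + v_1 v_2)$; so if $\alpha \in \ker\res$, then $p(v_1, v_2^2 + v_1 v_2) = 0$ in $\Z/2\Z[v_1,v_2]$. The key algebraic lemma I would use is that $\Z/2\Z[v_1,\; v_2^2 + v_1 v_2] \hookrightarrow \Z/2\Z[v_1, v_2]$ is a polynomial subring: indeed, $v_2$ satisfies the monic quadratic $t^2 + v_1 t + (v_2^2+v_1v_2) = 0$, so $\Z/2\Z[v_1,v_2]$ is a free rank-$2$ module over $\Z/2\Z[v_1,\; v_2^2+v_1v_2]$, which forces $v_1$ and $v_2^2+v_1v_2$ to be algebraically independent over $\Z/2\Z$. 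Hence $p = 0$.

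With $p = 0$ the element becomes $\alpha = y\,q(x,w)$, and applying $\res_2$ yields $u_1\,q(u_1,\, u_2^2 + u_1 u_2) = 0$. Since $\Z/2\Z[u_1,u_2]$ is an integral domain and $u_1\neq 0$, this forces $q(u_1,\, u_2^2 + u_1 u_2) = 0$; the same algebraic-independence argument (with $u_i$ in place of $v_i$) gives $q = 0$, and therefore $\alpha = 0$. I expect the only delicate point to be this algebraic-independence lemma; the rest amounts to bookkeeping with characters and eigenvalues. Since both $E_1, E_2$ play a symmetric role and $\res_1(y)=0$ kills the $y$-part while $\res_2$ is needed to dispose of it, neither subgroup alone can detect $H^*(D_m)$, which is presumably the source of the error in \cite{FP} that is being corrected.
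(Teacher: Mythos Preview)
Your proof is correct and takes a somewhat different route from the paper's. Both arguments begin with the same computation of $\res_i$ on the generators $x,y,w$, obtaining \eqref{resxyw}. After that the paper reduces monomials via $x^iy^j=y^{i+j}$ (for $i,j>0$) to a normal form $\sum a_{ij}x^iw^j+\sum b_{kl}y^kw^l+\sum c_tw^t$, and proves injectivity degree by degree using an ad hoc combinatorial lemma (Lemma \ref{detlem}): the set $\{a^ib^j(a+b)^j:i+2j=d\}$ is linearly independent in $\Z/2\Z[a,b]$. You instead reduce via $y^j=x^{j-1}y$ to the normal form $p(x,w)+y\,q(x,w)$ and argue sequentially: since $\res_1(y)=0$, the first restriction forces $p=0$ by the algebraic independence of $v_1$ and $v_2^2+v_1v_2$; then $\res_2$ together with the integral-domain property of $\Z/2\Z[u_1,u_2]$ forces $q=0$. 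Your approach is arguably cleaner, since the key algebraic input (a finite integral extension preserves transcendence degree) is standard commutative algebra, whereas the paper's Lemma \ref{detlem} requires its own short argument; your version also makes transparent \emph{why} both $E_1$ and $E_2$ are needed, exactly as you note at the end.
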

Proving this requires a lemma:

\begin{lemma}\label{detlem}
 Let $`d$' be a non-negative integer, and $\mathcal{P}_d=\{(i,j) : i,j\in  \Z_{\geq 0},\;  i + 2j = d \}$. Then, the set $S_d = \{a^ib^j(a+b)^j : (i,j) \in \mathcal P_d\}$ in $\Z/2\z[a,b]$ is linearly independent.	
\end{lemma}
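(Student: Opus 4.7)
The plan is to prove linear independence via a leading-term argument in the $a$-variable. Expanding by the binomial theorem over $\Z/2\Z$,
\[
a^i b^j(a+b)^j \;=\; \sum_{k=0}^{j} \binom{j}{k}\, a^{i+k}\, b^{2j-k},
\]
I would identify the monomial of highest $a$-exponent in this expression: it is $a^{i+j}b^j = a^{d-j}b^j$, obtained at $k=j$ with coefficient $\binom{j}{j}=1$. Since the value $j$ uniquely determines the pair $(i,j)\in\mathcal{P}_d$ via $i=d-2j$, these leading monomials $a^{d-j}b^j$ are pairwise distinct as $(i,j)$ ranges over $\mathcal{P}_d$.

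Given this, I would take an arbitrary purported relation $\sum_{(i,j)\in\mathcal{P}_d} c_{i,j}\, a^i b^j(a+b)^j = 0$ and let $j_0$ be the smallest $j$ appearing in the support of $\{c_{i,j}\}$, with $i_0 := d-2j_0$. The monomial $a^{d-j_0}b^{j_0}$ occurs with coefficient $1$ in the term indexed by $(i_0,j_0)$. In every other term $a^i b^j(a+b)^j$ with $j > j_0$ in the support, the $a$-exponents are bounded above by $i+j = d-j < d-j_0$, so that monomial cannot appear at all. Reading off its coefficient in the relation then yields $c_{i_0,j_0}=0$, contradicting the choice of $j_0$.

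The main obstacle is essentially bookkeeping — arranging the ordering so that the leading term of one element of $S_d$ cannot be cancelled by any of the others. A cleaner repackaging I would mention is the substitution $q := b(a+b) \in \Z/2\Z[a,b]$, under which $a^i b^j(a+b)^j = a^{d-2j}\,q^j$; the lemma then reduces to the algebraic independence of $a$ and $q$ in $\Z/2\Z[a,b]$. This independence follows by an analogous leading-term argument in $b$, using that the highest $b$-power in $q^j$ is $b^{2j}$ with coefficient $1$. Either formulation gives the conclusion with minimal effort.
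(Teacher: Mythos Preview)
Your proof is correct and follows essentially the same leading-$a$-degree argument as the paper: both take the minimal $j$ (equivalently, the paper's minimal $\ell$) in a purported dependence relation and observe that the corresponding term has strictly larger $a$-degree than all remaining terms, forcing its coefficient to vanish. Your additional repackaging via $q=b(a+b)$ is a pleasant alternative viewpoint not present in the paper, but the core argument is the same.
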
	

\begin{proof}
	The set $\mathcal P_d$ can be enumerated as follows:
	\begin{equation*} 
		(d,0), (d-2,1),\ldots,(d-2k,k), \ldots, 
		\begin{cases}
			(0,d/2), &\quad\text{ when } d \text{ is even}\\
			(1,(d-1)/2),&\quad\text{ when } d \text{ is odd}.
		\end{cases}
	\end{equation*}
For a polynomial $p\in\z/2\z[a,b]$, let $\deg_a(p)$ be the highest degree of $`a$' appearing in $p$. If we put $q_{ij}(a,b) = a^ib^j(a+b)^j$, then $\deg_a(q_{ij})=i+j$.

 Now suppose $S_d$ is linearly dependent. That is, there are $c_k$, not all zero, such that
\begin{equation}\label{ck}\sum\limits_{k=0}^{\lfloor d/2 \rfloor} c_k q_{d-2k,k}= 0. \end{equation}

Let $\ell$ be the least integer with $c_{\ell}=1$. For $k>\ell$, we have $$\deg_a(q_{d-2k,k})=d-k<d-\ell.$$
But then condition \eqref{ck} forces $c_\ell$ to be zero, which is a contradiction.

 %thus the degree goes on decreasing by 1 with increasing $j$. Since $\deg_a(q_{d,0})=d$ is highest among degrees, we have  $c_0=0$ and the proof follows by induction.
\end{proof}

\begin{proof}[Proof of Proposition \ref{e1e2}]
Consider the restriction map
$$\res^*:H^*(G)\to H^*(E_1)\oplus H^*(E_2).$$

We first find the images of $x,y,w$ under $\res^*$. An easy calculation shows
\begin{equation}\label{chires}
\res^G_{E_1}\chi_r=1,\;\res^G_{E_2}\chi_r=\as_2, \text{ and }\res^G_{E_i}\chi_s=\as_i \end{equation} for $i=1,2$. This, together with \eqref{cohomD2n}, 
\eqref{incl}, implies
 $$\res^*(x)=(v_1,u_1)\quad,\quad\res^*(y)=(0,u_1).
$$

For $\res^*(w)$, consider the standard representation $\sigma$ of $G$. A simple eigenvalue calculation shows $\res^G_{E_1}\sigma$ is equivalent to the representation mapping
$$r_c\mapsto \begin{pmatrix}
-1&&0\\
0&&-1
\end{pmatrix}\quad,\quad s\mapsto \begin{pmatrix}
-1&&0\\
0&&1
\end{pmatrix}.$$
This gives $\res^G_{E_1}\sigma=\beta_1\oplus(\as_1\otimes\beta_1)$, implying
\begin{align*}
w(\res^G_{E_1}\sigma)&=w(\beta_1\oplus(\as_1\otimes\beta_1))\\
&=(1+v_2)(1+v_1+v_2)\\
&=1+v_1+v_2(v_1+v_2).
\end{align*}
The above equality uses $w(\as_1\otimes\beta_1)=1+w_1(\as_1)+w_1(\beta_1).$
Similarly, we have $w(\res^G_{E_2}\sigma)=1+u_1+u_2(u_1+u_2).$ Therefore, $\res^*$ maps
\begin{equation}\label{resxyw}
\begin{split}
x&\mapsto(v_1,u_1)\\
y&\mapsto (0,u_1)\\
w&\mapsto (v_2^2+v_1v_2,u_2^2+u_1u_2).
\end{split}
\end{equation}
To prove $\res^*$ is injective, consider an arbitrary element $g=\sum\limits_{i,j,k\geq 0} a_{ijk}x^iy^jw^k\in H^*(G)$. The summand might have terms of the form $y^jw^k$ for $i=0$ and $x^iw^k$ for $j=0$. But for terms with $i,j>0$, we use $y^2+xy=0$ in $H^*(G)$ to simplify $x^iy^jw^k$:
\begin{align*}
x^iy^jw^k&=x^{i-1}(xy)y^{j-1}w^k\\
&=x^{i-1}(y^2)y^{j-1}w^k\\
&=x^{i-1}y^{j+1}w^k\\
&=y^{i+j}w^k.
\end{align*}
Thus, we can write
$$g=\sum_{i>0,\;j\geq 0}a_{ij}x^iw^j+\sum_{k>0,\;l\geq 0}b_{k,l}y^kw^l+\sum_{t\geq 0}c_tw^t.$$

Set $\delta_d=\begin{cases}
0& d\text{ is odd}\\
1& d\text{ is even}
\end{cases},$ and  $\mc P^+_0=\emptyset$. For a positive integer $d$, let $\mc P^+_d=\{(i,j)\in \z_{>0}\times\z_{\geq0}:i+2j=d\},$ a subset of $\mathcal P_d$. Then,
\begin{align*}
\res^*(g)&=\res^*\left(\sum_{d> 0}\sum_{(i,j)\in P_d}a_{ij}x^iw^j+\sum_{d> 0}\sum_{(k,l)\in P_d}b_{k,l}y^kw^l+\sum_{t\geq 0}c_tw^t\right)\\
&=\sum_{d\geq0}\left(\sum_{(i,j)\in P_d}a_{ij}\res^*(x^iw^j)+\sum_{(k,l)\in P_d}b_{k,l}\res^*(y^kw^l)+\delta_dc_{d/2}w^{d/2}\right),
\end{align*}
where for each $d$, the term inside the parenthesis belongs to $H^d(G)$. Suppose $\res^*(g)=0$. Since $H^*(G)$ is a graded ring, this is equivalent to saying for each $d\geq 0$,
$$\sum_{(i,j)\in P_d}a_{ij}\res^*(x^iw^j)+\sum_{(k,l)\in P_d}b_{k,l}\res^*(y^kw^l)+\delta_dc_{d/2}\res^*(w^{d/2})=0.$$

Thus, for injectivity of $\res^*$, it is enough to prove that the set  $$\{\res^*(x^iw^j), \res^*(y^kw^l),\delta_d\res^*(w^{d/2}):(i,j),(k,l)\in \mc P^+_d\}$$ is linearly independent for all $d\in \z_{\geq 0}.$ We have
\begin{align*}
\res^*(x^iw^j)&=(v_1^iv_2^j(v_1+v_2)^j,u_1^iu_2^j(u_1+u_2)^j)\text{ for } i,j\geq 0\\
\res^*(y^kw^l)&=(0,u_1^ku_2^l(u_1+u_2)^l)\text{  for } k>0, \:l\geq 0.
\end{align*}
This further reduces our problem to showing
$$\{(v_1^iv_2^j(v_1+v_2)^j,0), (0,u_1^ku_2^l(u_1+u_2)^l), \delta_d(v_2^{d/2}(v_1+v_2)^{d/2},u_2^{d/2}(u_1+u_2)^{d/2}):(i,j),(k,l)\in \mc P^+_d\}$$
is linearly independent for all $d\in \z_{\geq 0}$, which in turn follows from Lemma \ref{detlem}.

\end{proof}
 We now use Proposition \ref{e1e2} to determine SWCs for $G=D_m$.

\subsection{Formula for SWCs}
  Let $m=0\pmod 4$, and $\pi$ be a representation of $G = D_m$. It has the form\begin{equation}\label{fpi}
\pi=n_01\oplus n_s\chi_s\oplus n_r \chi_r\oplus n_{rs}\chi_{rs}\oplus\bigoplus_{i=1}^{m/2-1}d_i\sigma_i,
\end{equation}
where $n_0$, $n_s$, $n_r$, $n_{rs}$, $d_i$ are non-negative integers. Define $$d_e=\sum\limits_{i\text{ even }}d_i\quad, \quad d_o=\sum\limits_{i\text{ odd}}d_i.$$

To determine the total SWC $w(\pi)$, we first find SWCs for $\sigma_k$. Consider the subgroups $E_1,E_2$ with linear characters $\as_i,\beta_i$ as defined in Section \ref{detect}. From an eigenvalue calculation, we observe that
\begin{equation}\label{sigres}\res^G_{E_i}\sigma_k=\begin{cases}
\beta_i\oplus (\as_i\otimes\beta_i),& \text{when $k$ is odd}\\
1\oplus\as_i,& \text{when $k$ is even}.
\end{cases}\end{equation}
When $k$ is odd,
\begin{align*}
\res^*(w(\sigma_k))&=(w(\res^G_{E_1}\sigma_k), w(\res^G_{E_2}\sigma_k))\\
&=((1+v_2)(1+v_1+v_2),(1+u_2)(1+u_1+u_2))\\
&=(1+v_1+v_2(v_1+v_2),1+u_1+u_2(u_1+u_2))\\
&=(1,1)+(v_1,u_1)+(v_2^2+v_1v_2,u_2^2+u_1u_2).
\end{align*}
Thus, $w(\sigma_k)=1+x+w$, due to \eqref{resxyw}. Similarly, when $k$ is even, \begin{align*}
\res^*(w(\sigma_k))&=(1+v_1,1+u_1)\\
&=(1,1)+(v_1,u_1)
\end{align*}
which gives $w(\sigma_k)=1+x$ in this case.\\

Now we prove our main result about the total SWC $w(\pi)$:

\begin{proof}[Proof of Theorem \ref{main}]
Let $\pi$ be as in \eqref{fpi}.
By multiplicativity of SWCs, we have
\begin{align*}
w(\pi)&=(1+x)^{n_s+d_e}(1+y)^{n_r}(1+x+y)^{n_{rs}}(1+x+w)^{d_o}.
\end{align*}
As $(1+y)(1+x+y)=1+x$ in $H^*(G)$, the above formula becomes
$$w(\pi)=(1+y)^{a_\pi}(1+x+y)^{b_\pi}(1+x+w)^{c_\pi}.$$
with  $a_\pi=n_s+n_r+d_e$, $b_\pi=n_s+n_{rs}+d_e$, $c_\pi=d_o$.\\

To determine the character formulas for $a_\pi,b_\pi$ and $c_\pi$, we restrict $\pi$ to $E_i$:
\begin{equation}\label{rese1e2}
\begin{split}
\res^G_{E_1}\pi&=(n_0+n_r+d_e)1\oplus (n_s+n_{rs}+d_e)\as_1\oplus d_o\beta_1\oplus d_o(\as_1\otimes\beta_1),\\
\res^G_{E_2}\pi&=(n_0+n_{rs}+d_e)1\oplus (n_s+n_r+d_e)\as_2\oplus d_o\beta_2\oplus d_o(\as_2\otimes\beta_2).
\end{split}
\end{equation} 
This comes from \eqref{chires} and \eqref{sigres}. For $\psi$  irreducible, let mult$(\psi,\varphi)$ denote the multiplicity of $\psi$ in $\varphi$. Note that 
\begin{align*}
a_\pi&=\mult(\as_2,\res^G_{E_2}\pi)\\
b_\pi&=\mult(\as_1,\res^G_{E_1}\pi)\\
c_\pi&=\mult(\as_i\otimes\beta_i,\res^G_{E_i}\pi).
\end{align*}
We identify $E_1$ with Klein-$4$ group by $s\leftrightarrow (-1,1)$ and $r_c\leftrightarrow (1,-1)$. Similarly, for $E_2$, we identify $rs$ with $(-1,1)$ and $r_c$ again with $(1,-1)$.
%$a_\pi$ is the multiplicity of $\as_2$ in $\res^G_{E_2}$, $b_\pi$ is the 
%Here, $b_\pi$ is the coefficient of $\as_1$. Also, both $\beta_1$ and $\as_1\otimes\beta_1$ appear $c_\pi$ times in $\res^G_{E_1}\pi$.\\
This forces the identifications $\as_i\leftrightarrow\sgn\boxtimes 1$, $\beta_i\leftrightarrow 1\boxtimes \sgn$, and so on.

Using the character formulas from Proposition \ref{C2C2SWC} yields
\begin{align*}
a_\pi&=\frac14(\chi_\pi(1)-\chi_\pi(rs)+\chi_\pi(r_c)-\chi_\pi(rsr_c)),\\
b_\pi&=\frac14(\chi_\pi(1)-\chi_\pi(s)+\chi_\pi(r_c)-\chi_\pi(sr_c)),\\
c_\pi&=\frac14(\chi_\pi(1)-\chi_\pi(s)-\chi_\pi(r_c)+\chi_\pi(sr_c)).
\end{align*}

The elements $s$ and $sr_c$ are conjugates in $G$, and so are $rs$ and $rsr_c$. Therefore, we have the desired formulas by using $\chi_\pi(s)=\chi_\pi(sr_c)$, and  $\chi_\pi(rs)=\chi_\pi(rsr_c)$.

\end{proof}
\begin{ex} Let $\reg(G)$ be the regular representation of $G$. When $m$ is a multiple of $4$, 
$$w(\reg(D_m))=\left(1+x^2+w+xw\right)^{m/2}.$$
\end{ex}
We now prove the corollaries to Theorem \ref{main}.

\subsection{Corollaries}\label{sectW(G)}

%To find the generators of $W(D_m)$, we investigate the least positive integers that appear as the exponents in $w(\pi)$ as $\pi$ varies over the orthogonal representations of $G$. 
Since all representations are orthogonal for $G=D_m$, we have 
$$W(G)=
\langle w(\pi):\pi\text{ irreducible}\rangle\leqslant H^\bullet(G).$$

When $m$ is odd, the character $\chi_s$ has its total SWC $(1+v)$. The multiplicativity of SWCs, then, gives
$$W(D_m)=\{(1+v)^n:n\in \z\}.$$
Similarly, from Section \ref{C2C2}, it is straightforward that when $m= 2\pmod 4$, 
$$W(D_m)=\{(1+v_1)^a(1+v_2)^b(1+v_1+v_2)^c:a,b,c\in \z\}.$$

We now determine this subgroup for $G=D_m$ when $m= 0\pmod 4$:

\begin{proof}[Proof of Corollary \ref{W(G)iso}]
We define $\phi: \Z^3 \to W(G)$ by, $$\phi(a,b,c)=  (1+y)^a(1+x+y)^b(1+x+w)^c.$$ 
%Clearly, $\phi$ is a group homomorphism. 
Recall there are representations $\chi_r$, $\chi_{rs}$, $\sigma$ of $G$ such that 
\begin{align*}
w(\chi_r)&=1+y,\\
w(\chi_{rs})&=1+x+y,\\
w(\sigma)&=1+x+w.
\end{align*}
The multiplicativity of SWCs then implies that $\phi$ is surjective.

 Since $\phi$ is a group homomorphism and one can express $v\in \z^3$ as a difference $v=v_1-v_2$, where $v_1,v_2\in \z^3_{\geq 0}$,  it is enough to prove injectivity on $\z^3_{\geq 0}$.

 %that $\phi(v)=\phi(v')$ implies $v=0$ for all $v=(a,b,c)$ with non-negative integers $a,b,c$.

We first perform a change of variables in $H^*(D_m)$ (from Section \ref{ressyl}) by setting $z=x+y$ so that
$$H^*(D_{m})= \Z/2\Z[z,y,w]/(yz). $$ 
With this, $\phi$ becomes
$$  \phi(a,b,c)=(1+y)^a(1+z)^b(1+z+y+w)^c.$$

%\begin{prop}\label{W(G)-iso}
%	The map $\phi: \Z^3 \to W(G)$ defined by $\phi(a,b,c)=  (1+y)^a(1+x+y)^b(1+x+w)^c$ is a group isomorphism.
%\end{prop}

 %First we make a change of variable by putting $z=x+y$ and keeping $y$ the same. Then the cohomology ring becomes
Assuming $a,b,c$ are non-negative, the degrees of $y,z,w$ in $\phi(a,b,c)$ are $a+c,\: b+c,\:c$ respectively.	
Therefore, if $\phi(a,b,c)= \phi(a',b',c')$ for non-negative triplets $(a,b,c)$, $(a',b',c')$, then by comparison of degrees of $y,z,w$, the triplets must be the same. %More generally, since $\phi$ is a homomorphism, if it is not an isomorphism then let $v=(a,b,c) \in \ker \phi \subset \Z^3$ be non-zero. We can always choose $v_1=(a_1,b_1, c_1), v_2=(a_2,b_2,c_2) \in \Z_{\geq 0}^3$ such that $v=v_1-v_2 \in \Z^3$, contradicting, $\phi(v_1)= \phi(v_2)$ but $v_1 \neq v_2$.
\end{proof} 
As a consequence, we have:
\begin{proof}[Proof of Corollary \ref{wtriv}]
Recall from the proof of Theorem \ref{main} that $a_\pi=n_s+n_r+d_e$, $b_\pi=n_s+n_{rs}+d_e$, $c_\pi=d_o$.  In the proof of Corollary \ref{W(G)iso} above, since $\phi$ is an isomorphism, we have $w(\pi)=1$ if and only if $a_\pi=b_\pi=c_\pi=0$. Thus, the multiplicities $d_e,d_o,n_s,n_r,n_{rs}$ in Equation \eqref{fpi} are all zero, which completes the proof.
\end{proof}
\noindent A similar argument holds when $m$ is odd or $m= 2\pmod 4$.

\section{Top SWC}\label{TSWCsec}
In this section, we characterize representations $\pi$ with non-trivial $w_{\too}(\pi)$. When $m$ is a multiple of $4$, Corollary \ref{topswc} gives the top SWC, which we prove below.
\begin{proof}[Proof of Corollary \ref{topswc}]
 First, we suppose $w_{\too}(\pi)\neq 0.$ Recall $\pi$ has the form \eqref{fpi}. Clearly, irreducible representations with trivial top SWC must not appear in this decomposition. This implies $n_0=0$ as $w_{\too}(1)=0$, and $d_e=0$, as $w_{\too}(\sigma_{k})=0$ for all even $k$. Thus,
%As $w_{\too}(\sigma_{k})=0$ for all $k$ even, it reduces the top degree of $w(\pi)$ if it has positive multiplicity. Thus  $m_{2l}=0$ for $0<l<2^{n-3}$.  Now,
$$w(\pi)=(1+y)^{n_r}(1+x)^{n_s}(1+x+y)^{n_{rs}}(1+x+w)^{d_o}.$$

Suppose $n_r,n_{rs}$ are both non-zero, and WLOG $n_r<n_{rs}$. The relation $(1+y)(1+x+y)=1+x$ in $H^*(G)$ simplifies $w(\pi)$ to
$$w(\pi)=(1+x)^{n_r+n_s}(1+x+y)^{n_{rs}-n_r}(1+x+w)^{d_o}.$$

Here, the highest non-zero SWC is $w_h(\pi)=x^{n_r+n_s}(x+y)^{n_{rs}-n_r}w^{d_o}$ for $h=n_s+n_{rs}+2d_o$ which is not equal to $\deg \pi$. This is a contradiction. Therefore, either $n_r=0$ or $n_{rs}=0$. %By a similar argument for the case $0< n_{rs} < n_r$, we get  $n_{rs}=0$, which is also a contradiction. Hence either $n_r=0$ or $n_{rs}=0$.

On the contrary, if $n_0=d_e=n_r=0$, then
$$w(\pi)=(1+x)^{n_s}(1+x+y)^{n_{rs}}(1+x+w)^{d_o},$$
giving $w_{\too}(\pi)=x^{n_s}(x+y)^{n_{rs}}w^{d_o}\neq 0.$ A similar argument holds when $n_0=d_e=n_{rs}=0$.

 From Equation \eqref{rese1e2} and the character formula for the multiplicity of $1$ in a representation, we obtain
\begin{align*}
n_0+n_r+d_e&=\frac14\sum_{g\in E_1}\chi_\pi(g),\\
n_0+n_{rs}+d_e&=\frac14\sum_{g\in E_2}\chi_\pi(g).
\end{align*}
It follows from these equations that the condition $n_0=d_e=n_r=0$ (or $n_0=d_e=n_{rs}=0$) is equivalent to having $\ell_\pi=0$ (or resp. $m_\pi=0$).

\end{proof}
Recall the mod $2$ cohomology of $D_m$ is detected by $D_1=\{1,s\}$, when $m$ is odd, and by $D_2=\{1,s,r_c,sr_c\}$ when $m= 2\pmod 4$. For these cases, we have:

\begin{prop}
%Let  $G=D_m$, and $H=D_{2^n}$ be its detecting subgroup 
Let $\pi$ be a representation of $D_m$. The top SWC of $\pi$ is non-zero if and only if %$\deg\pi+\chi_\pi(s)=0$ when $m$ is odd, and
% $\deg\pi+2\chi_\pi(s)+\chi_\pi(r_c)=0$ when $m= 2\pmod 4$.
\begin{roster}
% \item The top SWC of $\pi$ is non-zero.
%\item $\mult(1,\pi|_H)$ is zero.

\item  $\deg\pi+\chi_\pi(s)=0$ when $m$ is odd,
\item $\deg\pi+\chi_\pi(r_c)+\chi_\pi(s)+\chi_\pi(sr_c)=0$ when $m= 2\pmod 4$.

\end{roster}

\end{prop}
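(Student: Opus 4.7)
\medskip
\noindent\emph{Proof plan.} The plan is to handle the two cases separately, using the total SWC formulas for $D_m$ with $m$ odd or $m=2\pmod 4$ recorded at the beginning of Section \ref{D2m}, together with the detection of the mod $2$ cohomology by the corresponding Sylow $2$-subgroup.

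For (i), with $m$ odd, $H^*(D_m)\cong \z/2\z[v]$ and $w(\pi)=(1+v)^{t_\pi}$ where $t_\pi=\tfrac12(\deg\pi-\chi_\pi(s))$. I would simply read off $w_d(\pi)=\binom{t_\pi}{d}v^d$ for $d=\deg\pi$. Since $|\chi_\pi(s)|\le d$, one has $0\le t_\pi\le d$, so $\binom{t_\pi}{d}\pmod 2$ vanishes unless $t_\pi=d$, which occurs exactly when $\deg\pi+\chi_\pi(s)=0$. (Equivalently, the restriction of $\pi$ to $\langle s\rangle$ has no trivial summand.)

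For (ii), with $m\equiv 2\pmod 4$, I would use the identification of $D_2=\{1,r_c,s,sr_c\}$ with $C_2\times C_2$ already set up in Section \ref{D2m} and apply Proposition \ref{C2C2SWC} to $\res_{D_m}^{D_2}\pi$, yielding
$$w(\pi)=(1+v_1)^{b_1}(1+v_2)^{b_2}(1+v_1+v_2)^{b_3}$$
in $H^*(D_m)\cong \z/2\z[v_1,v_2]$, with $b_1,b_2,b_3$ the multiplicities of the three nontrivial characters of $D_2$ in $\res\pi$. The key observation is that this is a product of nonzero polynomials in the integral domain $\z/2\z[v_1,v_2]$, so its top homogeneous component has degree $b_1+b_2+b_3$ and equals $v_1^{b_1}v_2^{b_2}(v_1+v_2)^{b_3}\neq 0$. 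Hence $w_d(\pi)\neq 0$ iff $d=b_1+b_2+b_3$. Writing $b_0$ for the multiplicity of the trivial character in $\res\pi$, we have $d=b_0+b_1+b_2+b_3$, so this condition is $b_0=0$; the standard inner product formula gives $4b_0=\deg\pi+\chi_\pi(r_c)+\chi_\pi(s)+\chi_\pi(sr_c)$, matching the statement.

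No real obstacle arises: once the SWC formulas and detection are in hand, each case reduces to asking when a leading coefficient is nonzero. The only point needing a brief justification is the leading-term argument in the integral domain $\z/2\z[v_1,v_2]$, which is immediate since each of $v_1,v_2,v_1+v_2$ is a nonzero element.
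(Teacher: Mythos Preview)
Your argument is correct and follows essentially the same route as the paper: both cases reduce, via detection by the Sylow $2$-subgroup, to observing that the top degree of $w(\pi)$ equals $\deg\pi$ precisely when the trivial character has multiplicity zero in the restriction. Your binomial-coefficient and integral-domain justifications spell out steps the paper leaves implicit, but the strategy is identical.
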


%In other words, $w_{\too}(\pi)\neq 0$ iff the multiplicity of the trivial representation in the restriction of $\pi$ to the detecting subgroup is zero.

\begin{proof}
For $H$ a detecting subgroup, $w_{\too}(\pi)$ is non-zero iff $w_{\too}(\pi|_H)$ is non-zero. When $m $ is odd, the restriction of $\pi$ to $D_1$ is,
$$\pi|_H=a_01\oplus a_1\sgn.$$

Now, $w_{\too}(\pi|_{D_1})$ is non-zero iff $a_0=\mult(1,\pi|_{D_1})=\frac12\left(\deg\pi+\chi_\pi(s)\right)$ is zero.\\

Similarly, when $m= 2\pmod 4$, we have

$$
\pi|_{D_2}=b_{0}1\oplus\bigoplus_{\substack{\chi\neq 1\\ \chi\text{ irreducible} }}b_\chi\chi 
$$
with $b_0=\mult(1,\pi|_{D_2})=\frac14\sum\limits_{g\in D_2}\chi_\pi(g)$. Again, $w_{\too}(\pi|_{D_2})\neq 0$ iff $b_0=0$, proving our claim.
\end{proof}

%\section{Spinoriality}\label{spinoriality}
%\begin{cor}
%A representation of $G$ is spinorial if and only if $c_\pi$ is even, and
%$$
%\binom{a_\pi+1}{2}=_2 \binom{b_\pi+1}{2}=_2  \binom{c_\pi+1}{2}.
%$$
%\end{cor}
\section{Lifting to the Extensions of O$(V)$}\label{liftOV}
Recall from Section \ref{spindef}, $\Or(V)$ has three non-trivial extensions by $\z/2\z$: $\widetilde\Or(V)$, $\Pin^+(V)$ and $\Pin ^-(V)$. Here, we give a criterion in terms of character values to see whether a representation $\pi$ of $D_m$ lifts to these extensions or not. Corollary \ref{spin} states this for the case when $m$ is multiple of $4$:
\begin{proof}[Proof of Corollary \ref{spin}]

From Theorem \ref{main}, we deduce
\begin{align*}
w_1(\pi)&=(b_\pi+c_\pi)x+(a_\pi+b_\pi)y,\\
w_2(\pi)&=\left[\binom{b_\pi}{2}+\binom{c_\pi}{2}+b_\pi c_\pi\right]x^2+\left[\binom{a_\pi}{2}+\binom{b_\pi}{2}\right]y^2+c_\pi w+c_\pi(a_\pi+b_\pi)xy.
\end{align*}
Recall $\pi$ lifts to $\widetilde\Or(V)$ iff $w_1(\pi)^2=0$, which in this case is iff $a_\pi+b_\pi=b_\pi+c_\pi=0\pmod 2$.\\
Similarly, $\pi$ lifts to $\Pin^+(V)$ iff $w_2(\pi)=0$. Clearly,
$c_\pi$ must be $0\pmod 2$. This simplifies $w_2(\pi)$ to give
\begin{align*}
w_2(\pi)&=\left[\binom{b_\pi}{2}+\binom{c_\pi}{2}\right]x^2+\left[\binom{a_\pi}{2}+\binom{b_\pi}{2}\right]y^2.
\end{align*}
 which gives the condition $$\binom{a_\pi}{2}+\binom{b_\pi}{2}=\binom{b_\pi}{2}+\binom{c_\pi}{2}=0\pmod 2.$$
 A representation $\pi$ is spinorial (or lifts to $\Pin^-(V)$) if and only if \begin{equation}\label{w2w1}w_2(\pi)=w_1(\pi)\cup w_1(\pi).\end{equation} 
Again, $c_\pi$ must be even and by comparing coefficients in \eqref{w2w1}, we obtain
\begin{align*}
\frac{a_\pi^2+a_\pi}{2}&=\frac{b_\pi^2+b_\pi}{2}=\frac{c_\pi^2+c_\pi}{2}\pmod 2,
\end{align*}
which completes the proof.
\end{proof}

\begin{prop}
Let $m$ be odd, and $\pi$ be a representation of $D_m$. Let $$t_\pi=\frac12(\chi_\pi(1)-\chi_\pi(s)).$$ Then, $\pi$ lifts to
\begin{roster}
\item $\widetilde \Or(V)$ iff  $t_\pi$ is even, 

\item $\Pin^+(V)$ iff $t_\pi= 0,1\pmod 4$, and

\item $\Pin^-(V)$ iff $t_\pi= 0,3\pmod 4$.
\end{roster}
\end{prop}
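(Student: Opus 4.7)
The plan is to apply the cohomological lifting criteria from Section \ref{spindef} directly to the closed-form total SWC established at the start of Section \ref{D2m}. Recall that for $m$ odd, the Sylow $2$-subgroup is $D_1 = C_2$, the restriction isomorphism \eqref{incl} gives $H^*(D_m) \cong \Z/2\Z[v]$ with $v = w_1(\chi_s)$, and one has the closed-form $w(\pi) = (1+v)^{t_\pi}$.

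First I would expand by the binomial theorem to read off the low-degree SWCs:
\begin{equation*}
w_1(\pi) = t_\pi \, v, \qquad w_2(\pi) = \binom{t_\pi}{2} v^2,
\end{equation*}
with all coefficients understood modulo $2$. Using $t_\pi^2 \equiv t_\pi \pmod 2$, the three lifting obstructions from Section \ref{spindef} become
\begin{align*}
w_1(\pi)^2 &= t_\pi \, v^2, \\
w_2(\pi) &= \binom{t_\pi}{2} v^2, \\
w_2(\pi) + w_1(\pi)^2 &= \Bigl( t_\pi + \binom{t_\pi}{2} \Bigr) v^2.
\end{align*}
Since $v^2$ is nonzero in $H^2(D_m)$, each obstruction vanishes iff its scalar coefficient is even mod $2$.

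The last step is a direct inspection of the four residues of $t_\pi$ modulo $4$: the pattern of $\binom{t_\pi}{2} \pmod 2$ is $(0,0,1,1)$ and the pattern of $t_\pi + \binom{t_\pi}{2} \pmod 2$ is $(0,1,1,0)$. Combined with the trivial parity condition on $t_\pi$ for (i), this yields (ii) exactly when $t_\pi \equiv 0, 1 \pmod 4$ and (iii) exactly when $t_\pi \equiv 0, 3 \pmod 4$.

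There is essentially no serious obstacle here, because $H^*(D_m)$ for odd $m$ is as simple as it could be and the total SWC is already in closed form; the argument is a purely mechanical unwinding. The only care needed is the correct reduction of binomial coefficients modulo $2$, which can be done either by direct tabulation on representatives modulo $4$ or instantly via Lucas's theorem.
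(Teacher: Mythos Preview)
Your proposal is correct and follows essentially the same approach as the paper: both start from $w(\pi)=(1+v)^{t_\pi}$, read off $w_1(\pi)^2=t_\pi v^2$ and $w_2(\pi)=\binom{t_\pi}{2}v^2$, and then apply the lifting criteria of Section~\ref{spindef}. Your explicit mod~$4$ tabulation simply spells out what the paper leaves to the reader.
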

\begin{proof}
%Recall $D_1=\{1,s\}$ is the detecting subgroup of $D_m$, when $m$ is odd. 
Recall  for $m$ odd, $$w(\pi)=(1+v)^{t_\pi},$$ giving
$w_1^2(\pi)=t_\pi v^2$, and $w_2(\pi)=\dbinom{t_\pi}{2}v^2$.\\
The proof then follows using the lifting criteria from Section \ref{spindef}.

\end{proof}
\begin{prop}
Let $m= 2\pmod 4$, and $\pi$ be a representation of $D_m$ with
\begin{align*}
e_\pi&=\frac14\left(\deg\pi-\chi_\pi(r_c)+\chi_\pi(s)-\chi_\pi(sr_c)\right),\\
f_\pi&=\frac14\left(\deg\pi+\chi_\pi(r_c)-\chi_\pi(s)-\chi_\pi(sr_c)\right),\\
g_\pi&=\frac14\left(\deg\pi-\chi_\pi(r_c)-\chi_\pi(s)+\chi_\pi(sr_c)\right).
\end{align*}
Then, $\pi$ lifts to
\begin{roster}
\item $\widetilde \Or(V)$ iff  $e_\pi=f_\pi=g_\pi\pmod 2$,\vspace{0.5mm}

\item $\Pin^+(V)$ iff$\dbinom{e_\pi+g_\pi}{2}=\dbinom{f_\pi+g_\pi}{2}= e_\pi g_\pi+f_\pi g_\pi+e_\pi f_\pi=0\pmod 2 $, and\vspace{0.5mm}

\item $\Pin^-(V)$ iff $\dbinom{e_\pi+g_\pi+1}{2}=\dbinom{f_\pi+g_\pi+1}{2}=\dbinom{g_\pi}{2}= e_\pi g_\pi+f_\pi g_\pi+e_\pi f_\pi=0\pmod 2 $.
\end{roster}
\end{prop}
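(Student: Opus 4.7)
The plan is to mirror the proof of Corollary~\ref{spin}, now using the Klein-$4$ subgroup $D_2 = \{1, r_c, s, sr_c\}$, which detects the mod-$2$ cohomology of $D_m$ when $m \equiv 2 \pmod 4$ (as noted at the start of Section~\ref{D2m}). Identifying $D_2$ with $C_2 \times C_2$ via $r_c \leftrightarrow (-1,1)$ and $s \leftrightarrow (1,-1)$ sends the multiplicities $b_1, b_2, b_3$ of Proposition~\ref{C2C2SWC} to $e_\pi, f_\pi, g_\pi$ respectively, giving
$$
w(\pi) = (1+v_1)^{e_\pi}(1+v_2)^{f_\pi}(1+v_1+v_2)^{g_\pi}
$$
in $H^*(D_m) = \z/2\z[v_1, v_2]$.

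I would then extract the first two SWCs by expanding this triple product through degree~$2$. Direct expansion gives
$$
w_1(\pi) = (e_\pi + g_\pi)\, v_1 + (f_\pi + g_\pi)\, v_2,
$$
and, collecting degree-$2$ contributions and applying the mod-$2$ identity $\binom{m+n}{2} \equiv \binom{m}{2} + \binom{n}{2} + mn$ to fold the cross terms into single binomials,
$$
w_2(\pi) = \binom{e_\pi + g_\pi}{2}\, v_1^2 + \binom{f_\pi + g_\pi}{2}\, v_2^2 + (e_\pi f_\pi + e_\pi g_\pi + f_\pi g_\pi)\, v_1 v_2.
$$

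Finally, since the monomials $v_1^2, v_2^2, v_1 v_2$ are linearly independent in $H^2(D_m)$, each cohomological lifting criterion from Section~\ref{spindef} decouples into independent scalar conditions on $e_\pi, f_\pi, g_\pi$ in $\z/2\z$. The equation $w_1(\pi)^2 = 0$ collapses to $e_\pi \equiv f_\pi \equiv g_\pi \pmod 2$, which is (i). The equation $w_2(\pi) = 0$ reads off directly as the three conditions in (ii). For (iii), one compares $w_2(\pi)$ with $w_1(\pi)^2 = (e_\pi + g_\pi) v_1^2 + (f_\pi + g_\pi) v_2^2$ coefficient by coefficient: the $v_1 v_2$ coefficient gives $e_\pi f_\pi + e_\pi g_\pi + f_\pi g_\pi = 0 \pmod 2$, while applying $\binom{n+1}{2} = \binom{n}{2} + n$ converts the $v_1^2$ and $v_2^2$ equations into the shifted binomial forms $\binom{e_\pi + g_\pi + 1}{2} = \binom{f_\pi + g_\pi + 1}{2} = 0$. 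The main obstacle is pure bookkeeping in the degree-$2$ expansion and the repeated binomial manipulations; this case is actually slightly cleaner than Corollary~\ref{spin}, since $H^*(D_m)$ is a free polynomial ring here and there is no relation like $y^2 = xy$ to negotiate when reading off coefficients.
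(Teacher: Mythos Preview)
Your approach is essentially identical to the paper's: both start from Proposition~\ref{C2C2SWC} to write $w(\pi)=(1+v_1)^{e}(1+v_2)^{f}(1+v_1+v_2)^{g}$, expand to obtain $w_1(\pi)$ and $w_2(\pi)$, and then invoke the lifting criteria of Section~\ref{spindef}. The paper in fact stops after recording $w_1$ and $w_2$ and simply says the rest is ``similar to that of Corollary~\ref{spin},'' so your write-up is if anything more explicit.

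There is, however, one discrepancy you do not address. Your computation for part~(iii) produces exactly three conditions, namely $\binom{e_\pi+g_\pi+1}{2}\equiv\binom{f_\pi+g_\pi+1}{2}\equiv e_\pi f_\pi+e_\pi g_\pi+f_\pi g_\pi\equiv 0\pmod 2$, whereas the stated Proposition lists a fourth, $\binom{g_\pi}{2}\equiv 0$. Your derivation never produces this term, and you do not comment on it. In fact the extra condition appears to be spurious: taking $e_\pi=f_\pi=0$, $g_\pi=3$ (that is, $\pi=3(\sgn\boxtimes\sgn)$) one has $w_2(\pi)=(v_1+v_2)^2=w_1(\pi)^2$, so $\pi$ is spinorial, yet $\binom{3}{2}=3$ is odd. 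Since the paper's own proof does not work out part~(iii) explicitly, this looks like an error in the statement rather than a gap in your argument, but you should flag the mismatch rather than silently prove something different from what is claimed.
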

\begin{proof} For ease of notation, write $e,f,g$ for $e_\pi$, $f_\pi$, $g_\pi$ above. From Proposition \ref{C2C2SWC}, we have
\begin{align*}
w(\pi)&=(1+v_1)^e(1+v_2)^f(1+v_1+v_2)^g\\
&=\left(1+ev_1+\binom{e}{2}v_1^2+\hdots\right)\left(1+fv_2+\binom{f}{2}v_2^2+\hdots\right)\\
&\hspace{40mm}\left(1+g(v_1+v_2)+\binom{g}{2}(v_1^2+v_2^2)+\hdots\right)
\end{align*}
This gives,
\begin{align*}
w_1(\pi)&=(e+g)v_1+(f+g)v_2\\
w_2(\pi)&=\left(\binom{e}{2}+eg+\binom{f}{2}\right)v_1^2+\left(\binom{f}{2}+fg+\binom{g}{2}\right)v_2^2+(ef+fg+eg)v_1v_2\\
&=\binom{e+g}{2}v_1^2+\binom{f+g}{2}v_1^2+(ef+fg+eg)v_1v_2
\end{align*}
Again, the proof is similar to that of Corollary \ref{spin} using lifting criteria in Section \ref{spindef}.\end{proof}

\begin{ex}
Let $m= 0\pmod 4$, and $G=D_m$.
\begin{enumerate}
 \item None of the non-trivial irreducible representations ($\pi,V$) of $G$ lift to $\widetilde\Or(V)$ and $\Pin^-(V)$.
\item All linear characters of $G$ have a lift to $\Pin^+(V)$. In fact,  $\pi=\sigma_k$ with $k$ odd are the only irreducible representations without a lift to $\Pin^+(V)$.
\item The representation $2\chi_s=\chi_s\oplus \chi_s$ lifts to $\widetilde\Or(V)$, but not to $\Pin^-(V)$. In contrast,  $4\chi_s$  lifts to all the extensions of $\Or(V)$ by $\z/2\z$. 
Also, the regular representation $\reg(G)$ lifts to all these extensions.
\end{enumerate}
\end{ex}

\section{Spinoriality for Products of Dihedral Groups}
Let $G, G'$ be finite groups. Let $\pi, \pi'$ be representations of $G$ and $G'$ respectively with $\deg \pi=d$, $\deg\pi'=d'$. Then $\Pi=\pi\boxtimes\pi'$, the external tensor product of $\pi$ and $\pi'$, is an orthogonal representation of $G\times G'$. From \cite[Section 7.1]{Ganguly}, such a representation is spinorial if and only if the following elements in $H^2(G\times G')$ vanish: 
\begin{enumerate}
\item\label{1} $d'w_2(\pi)+\binom{d'+1}{2}w_1(\pi)\cup w_1(\pi)$,
\item\label{2} $(dd'+1)w_1(\pi)\otimes w_1(\pi')$,
\item\label{3} $dw_2(\pi')+\binom{d+1}{2}w_1(\pi')\cup w_1(\pi')$.
\end{enumerate}
\begin{lemma}\label{cond}
The condition (\ref{1}) above is equivalent to the restriction of $\Pi$ to  $G\times 1$ being spinorial.
\end{lemma}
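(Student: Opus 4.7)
The plan is to identify the restriction $\Pi|_{G \times 1}$ as a representation of $G$ and then translate the spinorial criterion into a statement about $w_1(\pi)$ and $w_2(\pi)$. Since $\pi'(1)$ acts as the identity on $V'$, we have $\Pi|_{G\times 1} \cong d'\pi$, namely the direct sum of $d'$ copies of $\pi$.

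First I would invoke the Whitney product formula to obtain $w(d'\pi) = w(\pi)^{d'}$ in $H^*(G)$. Expanding the first two terms of this product yields
\[
w_1(d'\pi) = d'\, w_1(\pi), \qquad w_2(d'\pi) = d'\, w_2(\pi) + \binom{d'}{2}\, w_1(\pi)^2.
\]
By the criterion recalled in Section \ref{spindef}, $d'\pi$ is spinorial if and only if $w_2(d'\pi) + w_1(d'\pi)^2 = 0$ in $H^2(G)$. Using that squaring is additive mod $2$ and $d'^2 \equiv d' \pmod 2$, one has $w_1(d'\pi)^2 = d'\, w_1(\pi)^2$, so the criterion becomes
\[
d'\, w_2(\pi) + \left[\binom{d'}{2} + d'\right] w_1(\pi)^2 \;=\; 0.
\]
The elementary identity $\binom{d'}{2} + d' = \binom{d'+1}{2}$ then reproduces condition (\ref{1}), interpreted in $H^2(G)$ rather than $H^2(G \times G')$.

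To complete the equivalence, I would observe that the projection $p\colon G \times G' \to G$ admits a section (the inclusion of $G \times 1$), so the pullback $p^*\colon H^*(G) \to H^*(G \times G')$ is split injective. Consequently the vanishing of $d'\, w_2(\pi) + \binom{d'+1}{2}\, w_1(\pi)^2$ in $H^2(G \times G')$ is equivalent to its vanishing in $H^2(G)$, closing the loop. The calculation is largely routine; the only point that really needs care is the bookkeeping between the two cohomology rings, which the injectivity of $p^*$ handles cleanly.
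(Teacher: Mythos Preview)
Your proof is correct and follows essentially the same approach as the paper: restrict to $d'\pi$, expand $w(\pi)^{d'}$, apply the spinoriality criterion $w_2+w_1^2=0$, and simplify using $\binom{d'}{2}+d'=\binom{d'+1}{2}$. Your additional remark that $p^*$ is split injective, which reconciles working in $H^2(G)$ with the statement of condition (\ref{1}) in $H^2(G\times G')$, is a nice clarification that the paper leaves implicit.
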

\begin{proof}
We have the restriction $\Pi|_{G\times 1}=d'\pi$, and its total SWC is
\begin{align*}
w(d'\pi)&=w(\pi)^{d'}\\
&=(1+w_1(\pi)+w_2(\pi)+\hdots)^{d'}\\
&=1+d'w_1(\pi)+d'w_2(\pi)+\binom{d'}{2}w_1(\pi)\cup w_1(\pi)+\hdots
\end{align*}
Now, $d'\pi$ is spinorial if and only if 
$$d'w_2(\pi)+\left(\binom{d'}{2}+d'^2\right) w_1(\pi)\cup w_1(\pi)\in H^2(G) $$
vanishes. Moreover, $$\binom{d'}{2}+d'^2=_2\binom{d'}{2}+d'=_2\binom{d'+1}{2},$$
completing the proof.

\end{proof}
Similarly, condition (\ref{3}) is same as the spinoriality of $\Pi|_{1\times G'}.$ Thus, the conditions (\ref{1}), (\ref{2}), (\ref{3}) along with Lemma \ref{cond} give Theorem \ref{prodDm}. A straightforward calculation with $G= D_m$ and $G' = D_{m'}$ yields a proof of Corollary \ref{aspin}.

\begin{ex}Suppose $m,m'$ both are multiples of $4$.
Let $k$ be even, and consider the representation of $D_{m} \times D_{m'}$ given by $$\Pi=(\sigma_k\oplus\sigma_k)\boxtimes(\sigma_k\oplus\sigma_k).$$
Applying Theorem \ref{prodDm} shows this representation is spinorial.
\end{ex}

\bibliographystyle{alpha}
\bibliography{mybib}
\vspace{10mm}

\end{document}